\newtheorem{theorem}{Theorem}
\newtheorem{proposition}[theorem]{Proposition}
\newtheorem{corollary}[theorem]{Corollary}
\theoremstyle{definition}
\theoremstyle{remark}
\newtheorem*{oss*}{Remark}
\newcommand{\N}{\mathbb N}
\newcommand{\R}{\mathbb R}
\newcommand{\C}{\mathbb C}
\newcommand{\cA}{{\mathcal A}}
\newcommand{\cF}{{\mathcal F}}
\newcommand{\cJ}{{\mathcal J}}
\newcommand{\cQ}{{\mathcal Q}}
\newcommand{\cR}{{\mathcal R}}
\newcommand{\cS}{{\mathcal S}}
\newcommand{\cW}{{\mathcal W}}
\newcommand{\email}[1]{\protect\href{mailto:#1}{#1}}
\newcommand{\D}{{\rm d}}
\newcommand{\E}{{\rm e}}
\begin{document}
\selectlanguage{english}
\title{Cone-Adapted Shearlets and Radon Transforms}

\author{Francesca Bartolucci\thanks{Department of Mathematics, University of Genoa, Via Dodecaneso 35, 16146 Genova, Italy (\email{bartolucci@dima.unige.it}, \email{demari@dima.unige.it}, \email{devito@dima.unige.it}).}
\and Filippo De Mari\footnotemark[1]
\and Ernesto De Vito\footnotemark[1]}

\maketitle
%\begin{center}
%{\large{\sc \gray{A} Unitarization and Inversion Formula\fb{\sout e}\ga{e} for\\ the Radon Transform \gray{for} \ga{between} Dual Pairs}}
%\vspace{0.5cm}\\
%{\footnotesize Giovanni S.\ Alberti$^{1}$, Francesca Bartolucci$^{1}$, Filippo De Mari$^1$, Ernesto De Vito$^1$}\\
%\vspace{0.1cm}
%{\footnotesize $^1$DIMA, University of Genova, Italy}\\
%\thispagestyle{empty}
%\end{center}

\abstract{We show that the cone-adapted shearlet coefficients can be computed by means of the limited angle horizontal and vertical (affine) Radon transforms and the one-dimensional wavelet transform. This yields formulas that open new perspectives for the inversion of the Radon transform.%We show that the 2D cone-adapted shearlet coefficients can be computed by means of three classical transforms: the Radon transform, a 1D wavelet transform and a 1D convolution. This yields formulas that open new perspectives both for finding a new algorithm to compute shearlet coefficients and for the inversion of the Radon transform.}
\vspace{2mm}

\noindent\textit{Key words.} Cone-adapted shearlets; wavelets; Radon transforms

%\noindent\textit{Mathematics Subject Classification}. 44A12, 42C40, 22D10. 
\section{Introduction}
\label{JSsec:introduction}
The inversion of the Radon transform is a classical ill-posed inverse problem and consists in reconstructing an unknown signal $f$ on $\R^2$ from its line integrals \cite{helgason99}. The Radon transform of a signal $f$ is a function on the affine projective space
$\mathbb P^1\times \R=\{\Gamma\mid \Gamma \text{ line of }\R^2\}$ whose value at a line is the integral of $f$ along that line. We label lines in the plane by pairs $(v,t)\in\R^2$ as $x+vy=t$ and we define the horizontal (affine) Radon transform $\mathcal{R} f:\R^2\to \C$ of any $f\in L^1(\R^2)$ by
\begin{equation*}
\mathcal{R} f (v,t)=  \int_{\R}f(t-v y,y) \,\D y, \qquad\text{a.e.}\ (v,t)\in\R^2.
\end{equation*}
This version of the Radon transform is proved to be particularly well-adapted to the structure of the classical shearlet transform, see \cite{bardemadeviodo} and \cite{gr11}. We recall that the key idea in shearlet analysis is to construct a family of analyzing functions 
\[
\{S_{b,s,a}\psi(x)=|a|^{-3/4}\psi(A_a^{-1}N_s^{-1}(x-b)):b\in\R^2,\, s\in\R,\, a\in\R^{\times}\}
\]
by translating, shearing and dilating a fixed initial function $\psi\in L^2(\R^2)$, called mother shearlet. Once we have this family of analyzing functions, we define the shearlet transform of any $f\in L^2(\R^2)$ by $\cS_{\psi}f(b,s,a)=\langle f,S_{b,s,a}\psi\rangle$. If $\psi$ satisfies the admissible condition \eqref{admissibleconditionshearlet} we can recover any signal $f\in L^2(\R^2)$ from its shearlet transform through the reconstruction formula 
\begin{equation}\label{intro}
 f = \int_{\R^{\times}}\int_\R \int_{\R^2}
 \cS_{\psi}f(b,s,a) \,     S_{b,s,a}\psi\ \D b\D s\frac{\D a}{|a|^3},
\end{equation}
where the integral converges in the weak sense.
%The shearlet transform  of a function $f\in L^2(\R^2)$ is defined by  $\cS_{\psi}f(b,s,a)=\langle f, S_{b,s,a}\psi\rangle$, where $S_{b,s,a}\psi$ is the dilated and translated of a fixed function $\psi\in L^2(\R^2)$ satisfyign (?).
In \cite{bardemadeviodo} we have shown that the classical shearlet transform can be realised by applying first the horizontal (affine) Radon transform, then by computing a one-dimensional wavelet transform and, finally, performing a one-dimensional convolution. 
This relation opens the possibility to recover a signal from its Radon transform by using the shearlet inversion formula \eqref{intro}, where the coefficients $\cS_{\psi}f(b,s,a)$ depend on $f$ only through its Radon transform. Thus, formula \eqref{intro} allows to reconstruct an unknown signal $f$ from it Radon transform $\cR f$ by computing the family of 
coefficients $\{\cS_{\psi}f(b,s,a)\}_{b\in\R^2,s\in\R,a\in\R^{\times}}$. Equation \eqref{intro} has a disadvantage if one wants to use it in applications since the shearing parameter $s$ is allowed to vary over a non-compact set. This gives rise to problems 
in the reconstruction of signals mostly concentrated on the $x$-axis since the energy of such signals is mostly concentrated in the coefficients $\cS_{\psi}f(b,s,a)$ as $s\to\infty$. The standard way to address this problem is so-called "shearlets on the cone" construction introduced by Kutyniok and Labate \cite{kula09} for classical admissible shearlets $\psi$ and then generalized by Grohs \cite{gr11} requiring weaker conditions on $\psi$. The basic idea in this construction is to decompose the signals as $f=P_{C}f+P_{C^{\bf v}}f$ previous to the analysis, where $P_{C}$ is the frequency projection on the horizontal cone $C=\left\{(\xi_1,\xi_2)\in\R^2:\left|\xi_2/\xi_1\right|\leq1\right\}$ and $P_{C^{\bf v}}$ is the projection on the vertical cone $C^{\bf v}=\left\{(\xi_1,\xi_2)\in\R^2:\left|\xi_1/\xi_2\right|\leq1\right\}$. Then, chosen a suitable window function $g$, the following reconstruction formula holds true:
\begin{align}\label{intro2}
\nonumber\|f\|^2&=\int_{\R^2}|\langle f,T_{b}g\rangle|^2\ {\rm d}b+\int_{-1}^{1}\int_{-2}^{2}\int_{\R^2}|\mathcal{S}_{\psi}[P_{C}f](b,s,a)|^2\ {\rm d}b{\rm d}s\frac{{\rm d}a}{|a|^3}\\
&+\int_{-1}^{1}\int_{-2}^{2}\int_{\R^2}|\mathcal{S}^{\bf v}_{\psi^{\bf v}}[P_{C^{\bf v}}f](b,s,a)|^2\ {\rm d}b{\rm d}s\frac{{\rm d}a}{|a|^3},
\end{align}
where $\cF\psi^{\bf v}(\xi_1,\xi_2)=\cF\psi(\xi_2,\xi_1)$ and the so-called vertical shearlet transform $\mathcal{S}^{\bf v}_{\psi^{\bf v}}f(b,s,a)$ is obtained  from the classical shearlet transform by switching the roles of the $x$-axis and the $y$-axis.
In formula \eqref{intro2}, $P_{C}f$ is reconstructed via the classical shearlet transform and $P_{C^{\bf v}}f$ via the vertical shearlet transform and this allows to restrict the shearing parameter $s$ over a compact interval.
In this paper, applying the "shearlets on the cone" construction to our results presented in \cite{bardemadeviodo}, we obtain for any $f\in L^1(\R^2)\cap L^2(\R^2)$ a reconstruction formula of the form \eqref{intro2}, i.e. where both the scale parameter $a$ and the shearing parameter $s$ range over compact intervals, and where the coefficients depend on $f$ only through its Radon transform. Precisely, we show that the shearlet coefficients $\mathcal{S}_{\psi}[P_{C}f](b,s,a)$ depend on $f$ through its (affine) horizontal Radon transform $\cR f(v,t)$ and the action of the projection $P_C$ on $f$ turns into the restriction of the directional parameter $v$ over the compact interval [-1,1]. Analogously, the vertical shearlet coefficients $\mathcal{S}^{\bf v}_{\psi^{\bf v}}[P_{C^{\bf v}}f](b,s,a)$ depend on the limited angle (affine) vertical Radon transform $\cR^{\bf v}f(v,t)$, $|v|\leq1$, obtained by switching the roles of the $x$-axis and the $y$-axis in the affine parametrization. Therefore, equation \eqref{intro2} allows to reconstruct an unknown signal $f\in L^1(\R^2)\cap L^2(\R^2)$ from its Radon transform by computing the family of coefficients $\{\langle f,T_{b}g\rangle,\mathcal{S}_{\psi}[P_{C}f](b,s,a),\mathcal{S}^v_{\psi^{v}}[P_{C^{\bf v}}f](b,s,a)\}_{b\in\R^2,s\in\R,a\in\R^{\times}}$ by means of Theorem~\ref{reconstructionteo}.
The different contributions $\cR f(v,t)$ and $\cR^{\bf v}f(v,t)$, $|v|\leq1$, reconstruct the frequency projections $P_Cf$ and $P_{C^{\bf v}}f$, respectively. Finally, in Section~\ref{sec:final} we generalize reconstruction formula \eqref{intro2} by applying to $f$ 
localization operators different from $P_C$ and $P_{C^{\bf v}}$ in order to avoid artificial singularities in the reconstructed signal. The paper is organised as it follows. In Section~\ref{sec:overview} we recall the notion of wavelet transform, shearlet transform and Radon transform and part of the results in \cite{bardemadeviodo}. In Section~\ref{sec:main} we present the main results. Finally, in Section~\ref{sec:final} we generalize the results presented in Section \ref{sec:main}.
\section{Preliminaries}\label{sec:overview}
In this section we introduce the notation and we recall the definition and the main properties of the three main ingredients, namely the wavelet transform, the shearlet transform and the Radon transform. Then, we recall part of the results in \cite{bardemadeviodo} which show how these three classical transforms are related.
\subsection{Notation}
We briefly introduce the notation. We set $\R^{\times}=\R\setminus\{0\}$. The Euclidean norm of a vector
$v\in\R^d$ is denoted by $|v|$ and its scalar product with $w\in\R^d$ by $v\cdot w$. For any $p\in[1,+\infty]$ we denote
by $L^p(\R^d)$ the Banach space of functions $f\colon\R^d\rightarrow\C$ that are $p$-integrable with respect to the Lebesgue measure $\D x$
and, if $p=2$, the corresponding scalar product and norm are
$\langle\cdot,\cdot\rangle$ and $\|\cdot\|$, respectively.
The Fourier transform is denoted by $\mathcal F$ both on
$L^2(\R^d)$ and on  $L^1(\R^d)$, where it is 
defined by
\[
\mathcal F f({\xi}\,)= \int_{\R^d} f(x) \E^{-2\pi i\,
  {\xi}\cdot x } \D{x},\qquad f\in L^1(\R^d).
\] 
If $G$ is a locally compact group, we denote by $L^2(G)$ the Hilbert
space of square-integrable functions with respect to a left Haar
measure on $G$. If $A\in M_{d}(\R)$, the vector space of square $d\times d$ matrices with real entries, $^t\! A$ denotes its transpose and we denote the (real) general linear group of size $d\times d$ by ${\rm GL}(d,\R)$. 
Finally, the translation operator acts on a
function $f:\R^d\to \C$ as
$
T_bf(x)=f(x-b),
$
for any $b\in\R^d$. 
\subsection{The wavelet transform}
The one-dimensional affine group $\mathbb{W}$ is the semidirect product $\R\rtimes\R^{\times}$ with group operation
\[
(b,a)(b',a')=(b+ab',aa')
\]
and left Haar measure $|a|^{-2}\D b\D a$. It acts on $L^2(\R)$ by means of the square-integrable representation 
\[
W_{b,a}f(x)=|a|^{-\frac{1}{2}}f\left(\frac{x-b}{a}\right).
\]
The wavelet transform is then $\cW_{\psi}f(b,a)=\langle f, W_{b,a}\psi\rangle$, which is a multiple of an isometry provided that $\psi\in L^2(\R)$ satisfies the admissibility condition, namely the Calder\'on equation,
\textcolor{black}{\begin{equation}\label{calderon}
0<\int_{\R}\frac{|\cF\psi(\xi)|^2}{|\xi|}\D\xi<+\infty
\end{equation}}
and, in such a case, $\psi$ is called a one-dimensional wavelet. 
\subsection{The shearlet transform}
In this subsection we start presenting the standard shearlet group introduced and studied in
 \cite{lawakuwe05,dastte10} and further investigated in \cite{codemanowtab06,codemanowtab10} as an extension of the Heisenberg group with homogeneous dilations
 and in \cite{cortab13} as a subgroup of the symplectic group. Furthermore, the standard shearlet group has been extended by F$\"u$hr in \cite{fu98,futo16} where the generalized shearlet dilation groups
are introduced.
The (parabolic) shearlet group $\mathbb{S}$ is the semidirect product
of $\R^2$ with the closed subgroup $K=\{N_sA_a\in {\rm GL}(2,\R):s\in\R,a\in\R^{\times}\}$ where 
\[
N_s=\left[\begin{matrix}1 & -s\\ 0 & 1\end{matrix}\right],\qquad A_a=a\left[\begin{matrix}1 & 0\\ 0 & |a|^{-1/2}\end{matrix}\right].
\]
We can identify the element $N_sA_a$ with the pair $(s,a)$ and write $(b,s,a)$ for the elements in $\mathbb{S}$. With this identification the product law amounts to 
\[
(b,s,a)(b',s',a')=(b+N_sA_ab',s+|a|^{1/2}s',aa').
\]
%and the inverse of $(b,s,a)$ is given by 
%\[
%(b,s,a)^{-1}=(-A_a^{-1}N_s^{-1}b,-|a|^{-1/2}s,a^{-1}).
%\]
A left Haar measure of $\mathbb{S}$ is 
\[
{\rm d}\mu(b,s,a)=|a|^{-3}{\rm d}b{\rm d}s{\rm d}a, 
\]
with ${\rm d}b$, ${\rm d}s$ and ${\rm d}a$ the Lebesgue measures on $\R^2$, $\R$ and $\R^{\times}$, respectively. 
The group $\mathbb{S}$ acts on $L^2(\R^2)$ via the square-integrable representation 
\begin{equation*}
S_{b,s,a}f(x)=|a|^{-3/4}f(A_a^{-1}N_s^{-1}(x-b))
\end{equation*}
and the shearlet transform $\cS_{\psi}f(b,s,a)=\langle f,S_{b,s,a}\psi\rangle$ is a multiple of an isometry from $L^2(\R^2)$ into $L^2(\mathbb{S},{\rm d}\mu)$ provided that $\psi\in L^2(\R^2)$ satisfies the admissible condition
\begin{equation}\label{admissibleconditionshearlet}
0<C_{\psi}=\int_{\R^2}\frac{|\cF\psi(\xi)|^2}{|\xi_1|^2}\D\xi<+\infty,
\end{equation}
where $\xi=(\xi_1,\xi_2)\in\R^2$  \cite{dahlke2008}, or equivalently 
\begin{equation*}
\int_{\R}\int_{\R}|\cF\psi(A_a{^{t}\!N_s}\xi)|^2\D s\frac{\D a}{|a|^{3/2}}=C_{\psi},\qquad \text{for a.e. $\xi\in\R^2/\{0\}$}.
\end{equation*}
Furthermore,  in such a case, we have the reconstruction formula 
\begin{equation}\label{reconstructionformulashearlet}
 f = \frac{1}{C_{\psi}}\int_{\R^{\times}}\int_\R \int_{\R^2}
 \cS_{\psi}f(b,s,a) \,     S_{b,s,a}\psi\ \D b\D s\frac{\D a}{|a|^3},
\end{equation}
where the integral converges in the weak sense, and
\begin{equation}\label{reconstructionformulashearlet2}
 \|f\|^2 = \frac{1}{C_{\psi}}\int_{\R^{\times}}\int_\R \int_{\R^2}
 |\cS_{\psi}f(b,s,a)|^2 \,  \D b\D s\frac{\D a}{|a|^3}.
\end{equation}
From now on, when we consider an admissible vector $\psi$, we suppose $C_{\psi}=1$.

Although the shearlet transform exhibits an elegant group structure and is based on the theory of square integrable representations, the reconstruction formula \eqref{reconstructionformulashearlet} 
has one disadvantage: the shearing parameter ranges over a non-compact set and this can constitute a limitation in applications. For example, if $f$ is the delta distribution supported on the $x$-axis, a classical model for an edge in an image, the high amplitude shearlet coefficients, i.e. the shearlet coefficients in which the energy of the signal is mostly concentrated, 
correspond to the shearlet coefficients $\cS_{\psi}f(b,s,a)$ as $s\to\infty$ \cite{kula09}. In order to avoid this problem Kutyniok and Labate \cite{kula09} proposed the "shearlets on the cone" construction which leads to a reconstruction formula of the form \eqref{reconstructionformulashearlet} in which both the scale parameter $a$ and the shearing parameter $s$ are restricted over compact sets. We briefly recall this construction.\\

Let $f\in L^2(\R^2)$. We consider the horizontal and vertical cones in the frequency plane
\begin{equation}\label{cones}
C=\left\{(\xi_1,\xi_2)\in\R^2:\left|\frac{\xi_2}{\xi_1}\right|\leq1\right\},\qquad C^{\bf v}=\left\{(\xi_1,\xi_2)\in\R^2:\left|\frac{\xi_1}{\xi_2}\right|\leq1\right\}.
\end{equation}
\begin{figure}[h]
\centering
	%\sidecaption[t]
	% Use the relevant command for your figure-insertion program
	% to insert the figure file.
	% For example, with the graphicx style use
	\includegraphics[scale=.4]{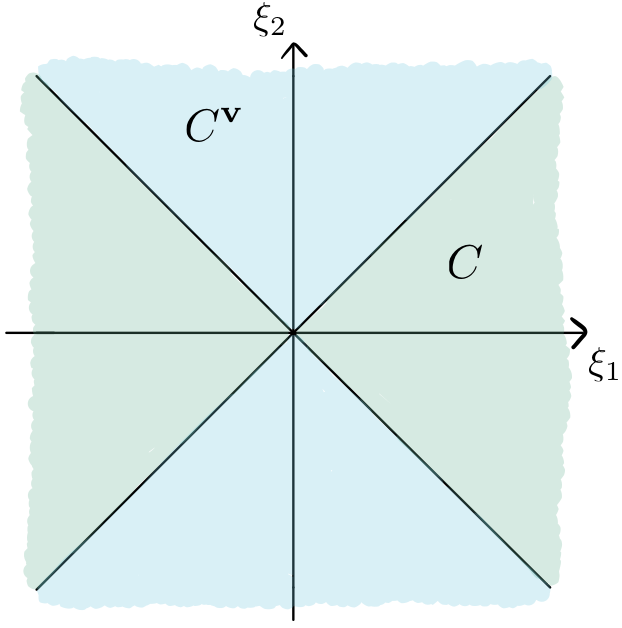}
	%
	% If no graphics program available, insert a blank space i.e. use
	%\picplace{5cm}{2cm} % Give the correct figure height and width in cm
	%
	\caption{In the "shearlets on the cone" construction the frequency plane is divided in two cones $C$ and $C^{\bf v}$. In formula \eqref{reconstructionghros}, $P_{C}f$ is
	reconstructed via the classical shearlet transform and $P_{C^{\bf v}}f$ via the so-called vertical shearlet transform.}
	\label{JSfig:1}       % Give a unique label
\end{figure} 
If $D$ is a region in the plane, we denote by $\chi_{D}$ its characteristic function, i.e.
\[
\chi_D(\xi)=\begin{cases}1 & \text{if}\,\,\,\xi\in D\\ 0 &\text{if}\,\,\,\xi\not\in D\end{cases}
\]
and we define the frequency projections of $f$ onto $C$ and $C^{\bf v}$ by 
\begin{align}\label{horizontalprojection}
\nonumber\cF(P_{C}f)(\xi_1,\xi_2)=\cF f(\xi_1,\xi_2)\chi_{C}(\xi_1,\xi_2)\\
\\
\nonumber\cF(P_{C^{\bf v}}f)(\xi_1,\xi_2)=\cF f(\xi_1,\xi_2)\chi_{C^{\bf v}}(\xi_1,\xi_2)
\end{align}
respectively. 
%Then, we fix an admissible vector $\psi\in L^2(\R^2)$ of the form 
%\begin{equation}\label{eq:2}
%\cF\psi(\xi_1,\xi_2)=\cF\psi_1(\xi_1)\cF\psi_2\left(\frac{\xi_2}{\xi_1}\right)
%\end{equation}
%and we define $\psi^v$ by 
%\begin{equation}\label{eq:3}
%\cF\psi^{v}(\xi_1,\xi_2)=\cF\psi_1(\xi_2)\cF\psi_2\left(\frac{\xi_1}{\xi_2}\right).
%\end{equation}

We need a modified version of the continuous shearlet transform obtained by switching the roles of the $x$-axis and the $y$-axis. 
We introduce the vertical shearlet representation
\begin{equation*}
S_{b,s,a}^{\bf v}f(x)=|a|^{-3/4}f(\tilde{A_a}^{-1}\tilde{N_s}^{-1}(x-b))
\end{equation*}
where 
\[
\tilde{N_s}=\left[\begin{matrix}1 & 0\\ -s & 1\end{matrix}\right],\qquad \tilde{A_a}=a\left[\begin{matrix}|a|^{-1/2} & 0\\ 0 & 1\end{matrix}\right],
\]
and the associated vertical shearlet transform $\cS^{\bf v}_{\psi}f(b,s,a)=\langle f,S_{b,s,a}^{\bf v}\psi\rangle$.\\

\textcolor{black}{Reconstruction formulas of the form \eqref{reconstructionghros} were firstly proved by Labate and Kutinyok  \cite{kula09} 
for classical admissible shearlets $\psi$ and then generalized by Grohs \cite{gr11} requiring weaker conditions on $\psi$. We have chosen to present our results within the second approach.}
We fix $\psi\in L^2(\R^2)$ satisfying the admissibility condition \eqref{admissibleconditionshearlet}. We require that $\psi$ is a smooth function with infinitely directional vanishing moments in the $x_1$-direction \cite{gr11}, that is 
\[
\int_{\R}x_1^N\psi(x_1,x_2)\D x_1=0,\qquad \text{for all}\, x_2\in\R,\, N\in\N.
\]
Finally, we define 
\begin{equation*}
\cF\psi^{\bf v}(\xi_1,\xi_2)=\cF\psi(\xi_2,\xi_1).
\end{equation*}
Then, we have the following result. 
\begin{theorem}\label{reconstructionteo1}
For any $f\in L^2(\R^2)$, we have the reconstruction formula 
\begin{align}\label{reconstructionghros}
\nonumber\|f\|^2&=\int_{\R^2}|\langle f,T_{b}g\rangle|^2\ {\rm d}b+\int_{-1}^{1}\int_{-2}^{2}\int_{\R^2}|\mathcal{S}_{\psi}[P_{C}f](b,s,a)|^2\ {\rm d}b{\rm d}s\frac{{\rm d}a}{|a|^3}\\
&+\int_{-1}^{1}\int_{-2}^{2}\int_{\R^2}|\mathcal{S}^{\bf v}_{\psi^{\bf v}}[P_{C^{\bf v}}f](b,s,a)|^2\ {\rm d}b{\rm d}s\frac{{\rm d}a}{|a|^3},
\end{align}
with $g\in C^\infty(\R^2)$ such that for all $\xi\in\R^2$
\begin{align}\label{W}
\nonumber|\cF g(\xi)|^2&+\chi_{C}(\xi)\int_{-1}^{1}\int_{-2}^{2}|\cF\psi(A_a{^t\!N_s}\xi)|^2\ {\rm d}s\frac{{\rm d}a}{a^{3/2}}\\
&+\chi_{C^{\bf v}}(\xi)\int_{-1}^{1}\int_{-2}^{2}|\cF\psi^{\bf v}(\tilde{A_a}N_s\xi)|^2\ {\rm d}s\frac{{\rm d}a}{a^{3/2}}=1.
\end{align}
\end{theorem}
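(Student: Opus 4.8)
The plan is to evaluate the three summands on the right-hand side of \eqref{reconstructionghros} one at a time by Plancherel's theorem, to rewrite each as an integral of $|\cF f|^2$ against an explicit nonnegative weight on the frequency plane, and then to add the three weights and recognise their sum, via the partition condition \eqref{W}, as the constant $1$; a last application of Plancherel then collapses the right-hand side to $\norm{f}^2$. Throughout, the interchanges of the order of integration are legitimate because every integrand is nonnegative (Tonelli), and the per-parameter Plancherel steps in the $b$ variable are justified because $\cF g$ is bounded (indeed $|\cF g|\le1$ by \eqref{W}) and $\cF\psi\in L^{\infty}(\R^2)$ by the regularity assumptions on $\psi$.

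For the window term I would observe that $b\mapsto\scal{f}{T_bg}$ is the cross-correlation of $f$ with $g$, whose Fourier transform is $\cF f\cdot\overline{\cF g}$; Plancherel then gives $\int_{\R^2}|\scal{f}{T_bg}|^2\,\D b=\int_{\R^2}|\cF f(\xi)|^2|\cF g(\xi)|^2\,\D\xi$. For the horizontal shearlet term I would fix $(s,a)$, use $S_{b,s,a}\psi=T_b(S_{0,s,a}\psi)$ to recognise the inner $b$-integral as another cross-correlation, and apply Plancherel together with $\cF(P_Cf)=\chi_C\,\cF f$ to obtain $\int_{\R^2}|\cS_{\psi}[P_Cf](b,s,a)|^2\,\D b=\int_{\R^2}|\cF f(\xi)|^2\chi_C(\xi)|\cF(S_{0,s,a}\psi)(\xi)|^2\,\D\xi$. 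The key computation is the Fourier transform of the atom: writing $S_{0,s,a}\psi(x)=|a|^{-3/4}\psi((N_sA_a)^{-1}x)$ and using $\cF[\psi(M^{-1}\,\cdot)]=|\det M|\,\cF\psi({}^t\!M\,\cdot)$ with $|\det(N_sA_a)|=|a|^{3/2}$ and ${}^t\!(N_sA_a)=A_a\,{}^t\!N_s$ (as $A_a$ is symmetric) gives $\cF(S_{0,s,a}\psi)(\xi)=|a|^{3/4}\cF\psi(A_a\,{}^t\!N_s\,\xi)$, hence $|\cF(S_{0,s,a}\psi)(\xi)|^2=|a|^{3/2}|\cF\psi(A_a\,{}^t\!N_s\,\xi)|^2$. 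Integrating in $(s,a)$ against the Haar weight $|a|^{-3}\,\D s\,\D a$ over $[-2,2]\times[-1,1]$ exactly reconstitutes the weight $\chi_C(\xi)\int_{-1}^{1}\int_{-2}^{2}|\cF\psi(A_a\,{}^t\!N_s\,\xi)|^2\,\D s\,\D a/|a|^{3/2}$ of \eqref{W}.

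The vertical term is handled identically, starting from $S^{\bf v}_{0,s,a}\psi^{\bf v}(x)=|a|^{-3/4}\psi^{\bf v}((\tilde{N_s}\tilde{A_a})^{-1}x)$. Here $|\det(\tilde{N_s}\tilde{A_a})|=|a|^{3/2}$ and the identity ${}^t\!\tilde{N_s}=N_s$ give ${}^t\!(\tilde{N_s}\tilde{A_a})=\tilde{A_a}\,{}^t\!\tilde{N_s}=\tilde{A_a}N_s$, whence $|\cF(S^{\bf v}_{0,s,a}\psi^{\bf v})(\xi)|^2=|a|^{3/2}|\cF\psi^{\bf v}(\tilde{A_a}N_s\,\xi)|^2$; together with $\cF(P_{C^{\bf v}}f)=\chi_{C^{\bf v}}\,\cF f$ this produces the third weight of \eqref{W}. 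Summing the three contributions and invoking \eqref{W} pointwise in $\xi$, the total weight equals $1$ identically, so the right-hand side of \eqref{reconstructionghros} reduces to $\int_{\R^2}|\cF f(\xi)|^2\,\D\xi=\norm{f}^2$ by Plancherel, which is the assertion.

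The one genuinely delicate ingredient is not this Plancherel bookkeeping but the existence of a smooth window $g$ satisfying the partition of unity \eqref{W}: one must verify that the gap function obtained by subtracting the two cone integrals from $1$ is nonnegative and smooth — notably across the seam $|\xi_1|=|\xi_2|$ separating the cones and near the origin — so that one of its square roots can serve as $|\cF g|$ with $g\in C^{\infty}(\R^2)$. This is precisely where the hypotheses that $\psi$ be smooth and possess infinitely many directional vanishing moments in the $x_1$-direction are used, and it is the content of the construction of Grohs \cite{gr11}, which I would cite rather than reprove.
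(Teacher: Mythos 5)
Your argument is correct, but it is genuinely different from what the paper does for this statement: the paper offers no proof of Theorem~\ref{reconstructionteo1} at all, deferring entirely to Grohs \cite{gr11} and to \cite[Chapter 2]{kula12}. Your three Plancherel computations are sound: the window term gives $\int_{\R^2}|\langle f,T_bg\rangle|^2\,{\rm d}b=\int_{\R^2}|\cF f(\xi)|^2|\cF g(\xi)|^2\,{\rm d}\xi$; the atom computation $\cF(S_{0,s,a}\psi)(\xi)=|a|^{3/4}\cF\psi(A_a{}^t\!N_s\xi)$, using $|\det(N_sA_a)|=|a|^{3/2}$ and the symmetry of $A_a$, turns the horizontal term into $\int_{\R^2}|\cF f(\xi)|^2\chi_C(\xi)\int_{-1}^{1}\int_{-2}^{2}|\cF\psi(A_a{}^t\!N_s\xi)|^2\,{\rm d}s\,{\rm d}a/|a|^{3/2}\,{\rm d}\xi$ once the factor $|a|^{3/2}$ is absorbed into the Haar weight $|a|^{-3}$; and ${}^t\tilde{N_s}=N_s$ yields the vertical analogue, so that \eqref{W} collapses the sum to $\|f\|^2$. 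In fact, this is precisely the computation the paper itself writes out later for its generalization, Theorem~\ref{reconstructionteofinal} (equations \eqref{firstequality2}, \eqref{secondequality2} and \eqref{thirdquality2}), with the sharp cutoffs $\chi_C,\chi_{C^{\bf v}}$ replaced by the smooth ones $\Phi^2,(\Phi^{\bf v})^2$. So your route makes Theorem~\ref{reconstructionteo1} self-contained, modulo the existence of a window $g$ satisfying \eqref{W}, which you rightly identify as the only place where smoothness and the infinite directional vanishing moments of $\psi$ enter, and which you delegate to \cite{gr11}; the paper's citation instead buys brevity and subsumes that existence question. One cosmetic point: you do not actually need $\cF\psi\in L^\infty$ (which smoothness of $\psi$ alone does not guarantee) to justify the per-$(s,a)$ Plancherel step, since for $h\in L^1(\R^2)$ the identity $\int|\cF^{-1}h|^2\,{\rm d}b=\int|h|^2\,{\rm d}\xi$ holds in $[0,+\infty]$ (because $\cF^{-1}h\in L^2$ already forces $h\in L^2$), after which Tonelli handles all the interchanges exactly as you say.
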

\textcolor{black}{
%It is worth observing that there always exists such a function $h$ provided that the admissible vector $\psi$ is smooth and possesses infinitely directional vanishing moments in the $x_1$-direction \cite{gr11}. 
We refer to \cite{gr11} and \cite[Chapter 2]{kula12} for the proof. 
%We will give a readaptation of the proof of Theorem~\ref{reconstructionteo1} to prove results in Section~\ref{sec:final}.
} 

\subsection{The Radon transform}
%\begin{figure}
%\centering
%\begin{tikzpicture} [>=latex]
%\fill (0,0) circle (1.4pt);
%%\draw plot [domain=-2:3] (\x, {-\x+2.5)});
%\draw [-] (2,-2) -- (2,2.5);
%\draw [->] (-2,0) -- (3,0);
%\node at (3,-0.2) {$x$};
%\draw [->] (0,-2) -- (0,2.5);
%\node at (-0.3,2.6) {$y$};
%%\draw [|-|] (0.1,-0.1) -- (1.1,0.9);
%\node at (-0.2,-0.2) {0};
%\node at (2.3,1.8) {$v$};
%%\node at (0.6,0.2) {$t$};
%\draw [->] (0,0) -- (2,2);
%\draw (0,0)  circle[radius = 2];
%\draw (1,0)  arc[radius = 1, start angle= 0, end angle= 45];
%\node at (1.2,0.7) {$\theta$};
%%\draw (0,0) circle [radius=2];
%%\draw [->] (0,0) -- (1,4);
%%\draw [->] (0,0) -- (1,-1);
%%\draw [->] (0,0) -- (1,-2);
%%\node at (-2,2) {$\Gamma_{v,t}:x+v  y=t$};
%\node at (2.6,2.6) {$x=1$};
%\node at (2.2,-0.2) {1};
%\fill (2,0) circle (1.4pt);
%\end{tikzpicture}
%\caption{The affine coordinates}
%\label{affine}
%\end{figure}
\textcolor{black}{The Radon transform of a signal $f$ is a function on the affine projective space
$\mathbb P^1\times \R=\{\Gamma\mid \Gamma \text{ line of }\R^2\}$ whose value at a line is the integral of $f$ along that line. It is usually defined by parametrizing the lines by pairs $(\theta,t)\in[0,\pi)\times\R$ as 
\[ \Gamma_{\theta,t}= \{ (x,y)\in \R^2\mid \cos{\theta}x+ \sin{\theta}y=t\}, \]
see \cite{helgason99}.} We label the normal vector to a line by
affine coordinates, that is 
\[ \Gamma_{v,t}= \{ (x,y)\in \R^2\mid x+ v y=t\}, \]
see Figure~\ref{JSfig:2}. With this parametrisation,
the horizontal lines 
can not be represented, but they constitute a negligible 
set with respect to the natural measure on $\mathbb P^1\times \R$.  
The \textcolor{black}{horizontal (affine)} Radon transform of any $f\in L^1(\R^2)$ is the function $\mathcal{R} f:\R^2\to \C$ defined by
\begin{equation*}
\mathcal{R} f (v,t)=  \int_{\R}f(t-v y,y) \,\D y, \qquad\text{a.e.}\ (v,t)\in\R^2.
\end{equation*}
\textcolor{black}{The choice of the affine parametrization} is particularly 
well-adapted to the mathematical structure of the shearlet transform, see also \cite{gr11}. 
It is possible to extend 
$\mathcal{R}$ to $L^2(\R^2)$ as a unitary map. However, this raises some
technical issues. First, consider the dense subspace of $L^2(\R^2)$
\[\mathcal D = \{ g\in L^2(\R^2)\mid \hspace{-.1cm}
    \int_{\R^2}
  |\tau| |(I\otimes \mathcal
    F)g(v,\tau)|^2\D v\D\tau<+\infty   \}, \]
where $I:L^2(\R)\to L^2(\R)$ is the identity operator, and then define the self-adjoint unbounded
operator $\mathcal{J}: \mathcal D\to 
  L^2(\R^2)$ by
\begin{equation*}(I\otimes \mathcal F)\mathcal{J} F (v,\tau)= |\tau|^{\frac{1}{2}}
(I\otimes \mathcal F) F (v,\tau),\qquad \text{a.e. }(v,\tau)\in \R^2,
\end{equation*}
which is a Fourier multiplier with respect to the  second variable. Then,
it is not hard to show that for all $f$ in the dense subspace of $ L^2(\R^2)$
\[
\cA=\{ f\in L^1(\R^2)\cap L^2(\R^2)\mid\! 
\int_{\R^2}\frac{|\mathcal F f(\xi_1,\xi_2)|^2}{|\xi_1|}\D\xi_1\D\xi_2<+\infty\},
\]
 the Radon
transform $\mathcal{R} f$ belongs to $\mathcal D$ and the map
\[
f \longmapsto \mathcal{J} \mathcal{R} f 
\]
from $\cA$ to $L^2(\R^2)$ extends to a unitary
map, denoted by $\mathcal{Q}$, from $L^2(\R^2)$ onto itself. We refer to \cite{helgason99} and \cite{bardemadeviodo} for technical details. We need the following version of the Fourier slice theorem.
\textcolor{black}{\begin{corollary}\label{cfst}
Let $f\in L^1(\R^2)\cap L^2(\R^2)$. For almost every $v\in\R$ the function $\cR f(v,\cdot)$ is in $L^2(\R)$ and satisfies
\begin{equation}\label{fst}
\cF(\cR f (v,\cdot))(\tau)= \mathcal F f(\tau, \tau v).
\end{equation}
Furthermore, for any $f\in L^2(\R^2)$
\begin{equation}\label{fstg}
\cF(\cQ f (v,\cdot))(\tau)= |\tau|^{\frac{1}{2}} \mathcal F f(\tau, \tau v), \qquad\text{a.e.}\ (v,t)\in\R^2.
\end{equation}
\end{corollary}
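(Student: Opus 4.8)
The plan is to treat the two identities separately: first establish \eqref{fst} together with the $L^2$-membership by a direct computation on the definition of $\cR$, and then deduce \eqref{fstg} by a density argument starting from the defining relation of $\cQ$.

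For \eqref{fst} I would first observe that $\cR f(v,\cdot)\in L^1(\R)$ for every $v$, with the uniform bound $\norm{\cR f(v,\cdot)}_{L^1(\R)}\le\norm{f}_{L^1(\R^2)}$ coming from Tonelli's theorem. This legitimises computing the one-dimensional Fourier transform of $\cR f(v,\cdot)$ as an $L^1$-transform. I would then write
\[
\cF(\cR f(v,\cdot))(\tau)=\int_\R\Big(\int_\R f(t-vy,y)\,\D y\Big)\E^{-2\pi i \tau t}\,\D t,
\]
apply Fubini (justified by the same $L^1$ bound) and perform the change of variables $x=t-vy$ at fixed $y$. The exponential factorises as $\E^{-2\pi i \tau x}\E^{-2\pi i (\tau v) y}$, and the resulting double integral is exactly $\cF f(\tau,\tau v)$. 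Since $f\in L^1(\R^2)$ the function $\cF f$ is bounded and continuous, so both sides are genuine functions and the identity holds pointwise.

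The delicate point is the claim that $\cR f(v,\cdot)\in L^2(\R)$ for almost every $v$, and I expect this to be the main obstacle. The naive route of integrating $\int_\R|\cF f(\tau,\tau v)|^2\,\D\tau$ against $\D v$ produces, after the substitution $\eta=\tau v$, the integral $\int_{\R^2}|\cF f(\xi)|^2|\xi_1|^{-1}\D\xi$, i.e. precisely the condition defining $\cA$, which may well diverge for a generic $f\in L^1(\R^2)\cap L^2(\R^2)$. I would therefore split the frequency line into $|\tau|\le1$ and $|\tau|>1$. On $|\tau|\le1$ the integrand is bounded by $\norm{\cF f}_{\infty}^2$ (again using $f\in L^1$), so this part is finite for every $v$. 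On $|\tau|>1$ the weight $|\tau|^{-1}$ is at most $1$, so integrating in $v$ first gives
\[
\int_\R\Big(\int_{|\tau|>1}|\cF f(\tau,\tau v)|^2\,\D\tau\Big)\D v=\int_{|\tau|>1}\frac{1}{|\tau|}\norm{\cF f(\tau,\cdot)}_{L^2(\R)}^2\,\D\tau\le\norm{f}_{L^2(\R^2)}^2<+\infty,
\]
whence the inner integral is finite for almost every $v$. Combining the two regions shows that $\tau\mapsto\cF f(\tau,\tau v)$ lies in $L^2(\R)$ for a.e. $v$; since $\cR f(v,\cdot)\in L^1(\R)$ has this function as its Fourier transform, the standard fact that an $L^1$ function with $L^2$ Fourier transform is itself in $L^2$ (uniqueness of the Fourier transform on tempered distributions) gives $\cR f(v,\cdot)\in L^2(\R)$, and \eqref{fst} persists as an identity in $L^2(\R)$. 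It is exactly this near/far splitting that separates the almost-everywhere-in-$v$ statement from the globally false $\cA$-membership.

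For \eqref{fstg} I would argue by density from $\cA$. Introduce the operator $U$ on $L^2(\R^2)$ defined by $Uf(v,\tau)=|\tau|^{\frac12}\cF f(\tau,\tau v)$, realised as the pullback of $\cF f$ by the almost-everywhere diffeomorphism $(v,\tau)\mapsto(\tau,\tau v)$, whose Jacobian is $|\tau|$, followed by multiplication by $|\tau|^{1/2}$; the change of variables then shows $\norm{Uf}_{L^2(\R^2)}=\norm{f}_{L^2(\R^2)}$, so $U$ is a well-defined isometry. For $f\in\cA$ one has $\cQ f=\cJ\cR f$, and applying the partial Fourier transform $I\otimes\cF$ together with the defining relation of $\cJ$ yields $(I\otimes\cF)(\cQ f)(v,\tau)=|\tau|^{1/2}\cF(\cR f(v,\cdot))(\tau)=|\tau|^{1/2}\cF f(\tau,\tau v)=Uf(v,\tau)$, where the middle step uses \eqref{fst}. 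Thus the bounded operators $(I\otimes\cF)\circ\cQ$ and $U$ agree on the dense subspace $\cA$, hence coincide on all of $L^2(\R^2)$, which is precisely \eqref{fstg}.
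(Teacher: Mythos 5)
Your proposal is correct and follows essentially the route the paper indicates: the paper gives no self-contained proof here, citing Helgason for \eqref{fst} as the classical Fourier slice theorem in affine coordinates and Appendix B of the reference for \eqref{fstg}, which it explicitly describes as ``based on \eqref{fst} and the fact that $\cJ$ is a Fourier multiplier'' --- precisely your argument, where the isometry $U$ agrees with $(I\otimes\cF)\circ\cQ$ on the dense subspace $\cA$ (where $\cQ=\cJ\cR$ by definition) and hence everywhere. Your near/far frequency splitting correctly supplies the a.e.-in-$v$ membership $\cR f(v,\cdot)\in L^2(\R)$, a detail the paper leaves entirely to the citation, and it rightly avoids the trap of requiring the $\cA$-condition, which need not hold for a general $f\in L^1(\R^2)\cap L^2(\R^2)$.
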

In \eqref{fst} and \eqref{fstg} the Fourier transform on the right hand side is in $\R^2$, whereas the operator $\cF$ on the left hand side is one-dimensional and acts on the variable $t$. We repeat this slight abuse of notation in other formulas below. The first statement in Corollary~\ref{cfst} is the classical Fourier slice theorem \cite{helgason99} adapted to the horizontal (affine) Radon transform \cite{bardemadeviodo}. The proof of the second part in Corollary~\ref{cfst} is not trivial because $\cQ$ cannot be written as the composition $\cJ\cR$ for arbitrary $f\in L^2(\R^2)$ and is based on \eqref{fst} and the fact that $\cJ$ is a Fourier
multiplier (see Appendix B in \cite{bardemadeviodo}).\\
}
\textcolor{black}{
We repeat the construction above by exchanging 
the role of the $x$-axis and the $y$-axis} and we parametrize the lines in the plane, except the vertical ones, by pairs $(v,t)\in\R\times\R$ as follows
\[ \Gamma_{v,t}= \{ (x,y)\in \R^2\mid vx+ y=t\}, \]
see Figure~\ref{JSfig:2}. The \textcolor{black}{vertical (affine)} Radon transform of any $f\in L^1(\R^2)$ is the function $\mathcal{R}^{\bf v} f:\R^2\to \C$ defined by
\begin{equation*}
\mathcal{R}^{\bf v} f (v,t)=  \int_{\R}f(x,t-vx) \,\D x, \qquad\text{a.e.}\ (v,t)\in\R^2.
\end{equation*}
\begin{figure}[h]
\centering
	%\sidecaption[t]
	% Use the relevant command for your figure-insertion program
	% to insert the figure file.
	% For example, with the graphicx style use
	\includegraphics[scale=.4]{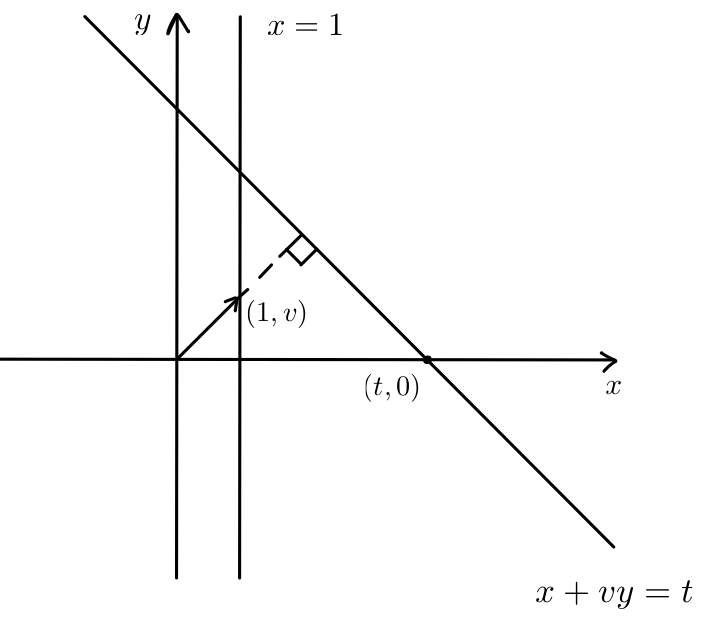}
	%
	% If no graphics program available, insert a blank space i.e. use
	%\picplace{5cm}{2cm} % Give the correct figure height and width in cm
	%
	\caption{The \textcolor{black}{horizontal} Radon transform is defined by labeling the normal vector to a line, except the horizontal ones, by affine coordinates (figure on top): $v$ parametrizes
	the slope of a line and $t$ its intersection with the $x$-axis. The \textcolor{black}{vertical} Radon transform is obtained just switching the roles of the $x$-axis and the $y$-axis in the previous parametrization.}
	\label{JSfig:2}       % Give a unique label
\end{figure} 
\textcolor{black}{As for the 
horizontal Radon transform, define the dense subspace ${\cA}^{\bf v}$ of $ L^2(\R^2)$ by 
\[
{\cA}^{\bf v}=\{ f\in L^1(\R^2)\cap L^2(\R^2)\mid\! 
\int_{\R^2}\frac{|\mathcal F f(\xi_1,\xi_2)|^2}{|\xi_2|}\D\xi_1\D\xi_2<+\infty\}.
\] 
Then, the composite operator $\mathcal{J} \mathcal{R}^{\bf v} :{\cA}^{\bf v}\to L^2(\R^2)$ extends to a unitary map ${\cQ}^{\bf v}$ from $L^2(\R^2)$ onto itself.}
\subsection{The Radon transform intertwines wavelets and shearlets}
We recall part of the results in \cite{bardemadeviodo}. We fix $\psi\in L^2(\R^2)$ of the form 
\begin{equation}\label{eq:2}
\cF\psi(\xi_1,\xi_2)=\cF\psi_1(\xi_1)\cF\psi_2\left(\frac{\xi_2}{\xi_1}\right),
\end{equation}
with $\psi_1\in L^2(\R)$ satisfying the conditions 
\begin{equation}\label{eq:4}
0<\int_{\R}\frac{|\cF\psi_1(\tau)|^2}{|\tau|}\ {\rm d}\tau<+\infty,\qquad\int_{\R}|\tau||\cF\psi_1(\tau)|^2\ {\rm d}\tau<+\infty
\end{equation}
and $\psi_2\in L^2(\R)$.
Then, $\psi$ satisfies the admissible condition \eqref{admissibleconditionshearlet} and the function $\phi_1\in L^2(\R)$ defined by 
\begin{equation}\label{phi}
\mathcal{F}\phi_1(\tau)=|\tau|^\frac{1}{2}\mathcal{F}\psi_{1}(\tau)
\end{equation}
is a one-dimensional wavelet, i.e. it satisfies condition \eqref{calderon}. \textcolor{black}{
%Finally, we define 
%\begin{equation}\label{eq:3}
%\cF\psi^{v}(\xi_1,\xi_2)=\cF\psi(\xi_2,\xi_1)=\cF\psi_1(\xi_2)\cF\psi_2\left(\frac{\xi_1}{\xi_2}\right).
%\end{equation}
\begin{theorem}\label{teocentrale}
For any $f\in L^2(\R^2)$ and $(x,y,s,a)\in \R^2\times\R\times\R^{\times}$,
%\begin{equation}\label{reconstructionQ}
% f =  \int_{\R^2}\int_{\R}\int_{\R^\times}
% \cS_{\psi}f(x,y,s,a) \,    S_{x,y,s,a}\psi\ \frac{\D a}{|a|^3}\D s\D x \D y,
%\end{equation}
%where the integral is weakly convergent and 
\begin{equation}\label{first}
\mathcal{S}_{\psi}f(x,y,s,a)=|a|^{-\frac{1}{4}}\int_{\mathbb{R}}\mathcal{W}_{\phi_1}(\cQ f(v,\cdot))(x+vy,a)\overline{\phi_2\left(\frac{v-s}{|a|^{1/2}}\right)}\ {\rm d}v
\end{equation}
and, analogously for the vertical shearlet transform,
\begin{equation*}
\mathcal{S}^{\bf v}_{\psi^{\bf v}}f(x,y,s,a)=|a|^{-\frac{1}{4}}\int_{\mathbb{R}}\mathcal{W}_{\phi_1}({\cQ}^{\bf v}f(v,\cdot))(vx+y,a)\overline{\phi_2\left(\frac{v-s}{|a|^{1/2}}\right)}\ {\rm d}v,
\end{equation*}
where $\phi_1$ is the one-dimensional wavelet defined by \eqref{phi} and $\phi_2=\cF\psi_2$.
\end{theorem}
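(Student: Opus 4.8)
The plan is to evaluate both sides of \eqref{first} by passing to the Fourier domain and to reconcile them through the Fourier slice theorem \eqref{fstg} together with a single change of variables. I would start from the definition $\cS_{\psi}f(x,y,s,a)=\scal{f}{S_{x,y,s,a}\psi}$ and apply Plancherel to write it as $\int_{\R^2}\cF f(\xi)\,\overline{\cF S_{x,y,s,a}\psi(\xi)}\,\D\xi$. The first task is to compute $\cF S_{x,y,s,a}\psi$ explicitly. Using the transformation law of the Fourier transform under translation, shear and dilation, together with $|\det(N_sA_a)|=|a|^{3/2}$ and ${}^t(N_sA_a)=A_a\,{}^t\!N_s$, one finds
\[
\cF S_{x,y,s,a}\psi(\xi)=|a|^{3/4}\,\E^{-2\pi i(x\xi_1+y\xi_2)}\,\cF\psi\bigl(a\xi_1,\,a|a|^{-1/2}(\xi_2-s\xi_1)\bigr).
\]
Inserting the product form \eqref{eq:2} and recalling $\phi_2=\cF\psi_2$ turns the second factor into $\cF\psi_1(a\xi_1)\,\phi_2\bigl(|a|^{-1/2}(\xi_2/\xi_1-s)\bigr)$, so that the frequency integral factorizes in the right way.

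Next I would perform the change of variables $\xi_1=\tau,\ \xi_2=\tau v$, whose Jacobian is $|\tau|$ and which sends the phase $x\xi_1+y\xi_2$ to $\tau(x+vy)$ and the ratio $\xi_2/\xi_1$ to $v$. This recasts $\cS_{\psi}f(x,y,s,a)$ as the iterated integral $|a|^{3/4}\int_\R\int_\R |\tau|\,\cF f(\tau,\tau v)\,\E^{2\pi i\tau(x+vy)}\,\overline{\cF\psi_1(a\tau)}\,\overline{\phi_2((v-s)/|a|^{1/2})}\,\D\tau\,\D v$, the null set $\{\xi_1=0\}$ excluded by the substitution being harmless.

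To match this with the right-hand side, I would expand the inner wavelet transform by one-dimensional Plancherel, $\cW_{\phi_1}(\cQ f(v,\cdot))(b,a)=|a|^{1/2}\int_\R \cF(\cQ f(v,\cdot))(\tau)\,\E^{2\pi i b\tau}\,\overline{\cF\phi_1(a\tau)}\,\D\tau$, and then invoke the Fourier slice theorem \eqref{fstg}, $\cF(\cQ f(v,\cdot))(\tau)=|\tau|^{1/2}\cF f(\tau,\tau v)$, together with the defining relation \eqref{phi}, which gives $\overline{\cF\phi_1(a\tau)}=|a|^{1/2}|\tau|^{1/2}\overline{\cF\psi_1(a\tau)}$. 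With $b=x+vy$ these combine to $|a|\int_\R |\tau|\,\cF f(\tau,\tau v)\,\E^{2\pi i\tau(x+vy)}\,\overline{\cF\psi_1(a\tau)}\,\D\tau$; multiplying by $|a|^{-1/4}\overline{\phi_2((v-s)/|a|^{1/2})}$ and integrating in $v$ reproduces exactly the iterated integral obtained for the left-hand side. The vertical identity then follows verbatim after exchanging the roles of the two axes and replacing $\cQ,\cR$ by $\cQ^{\bf v},\cR^{\bf v}$.

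The hard part will not be the algebra but the justification of these manipulations for a general $f\in L^2(\R^2)$: the integrands above are not a priori absolutely integrable, so the two applications of Plancherel and, above all, the interchange of the $\tau$- and $v$-integrations (Fubini) are not immediate. I would therefore first establish the identity on a convenient dense subspace — for instance $f\in\cA$ with $\cF f$ compactly supported away from $\{\xi_1=0\}$, where every integral converges absolutely and Fubini applies — and then extend to all of $L^2(\R^2)$ by continuity. For the extension one notes that, for each fixed $(x,y,s,a)$, the left-hand side $f\mapsto\scal{f}{S_{x,y,s,a}\psi}$ is a bounded linear functional, while on the right-hand side the unitarity of $\cQ$ (Corollary~\ref{cfst}) and the mapping properties of the wavelet transform supply the corresponding continuity in $f$; this is the step that requires the most care.
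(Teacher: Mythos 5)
Your computation is correct, and there is nothing in the paper to compare it against line by line: the paper states Theorem~\ref{teocentrale} without proof and defers to \cite{bardemadeviodo}, where the identity is established by essentially the argument you give (Plancherel, the explicit formula for $\cF S_{b,s,a}\psi$ combined with the tensor structure \eqref{eq:2}, the Fourier slice theorem, and the substitution $(\xi_1,\xi_2)=(\tau,\tau v)$). All your algebraic steps check out, including the factor bookkeeping $|a|^{-3/4}|\det(N_sA_a)|=|a|^{3/4}$ and the reconciliation of the weights $|\tau|^{1/2}\cdot|a\tau|^{1/2}$ from \eqref{fstg} and \eqref{phi} with the Jacobian $|\tau|$. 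One remark: the analytic justification you postpone to a density-and-continuity argument is in fact immediate for every $f\in L^2(\R^2)$, so that step is unnecessary (though not wrong). Indeed, by Cauchy--Schwarz $\int_{\R^2}|\cF f(\xi)|\,|\cF S_{x,y,s,a}\psi(\xi)|\,\D\xi\le\|f\|\,\|\psi\|<+\infty$, and since $(\tau,v)\mapsto(\tau,\tau v)$ is a diffeomorphism off a null set, the transformed integrand $|\tau|\,|\cF f(\tau,\tau v)|\,|\cF\psi_1(a\tau)|\,|\phi_2((v-s)/|a|^{1/2})|$ is absolutely integrable on $\R^2$; hence Fubini applies directly, and the one-dimensional Plancherel identities hold for a.e.\ $v$ because $\cQ f(v,\cdot)\in L^2(\R)$ for a.e.\ $v$ by unitarity of $\cQ$. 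The absolute convergence of the $v$-integral on the right-hand side of \eqref{first} -- needed even to interpret the statement -- follows likewise from $|\cW_{\phi_1}(\cQ f(v,\cdot))(b,a)|\le\|\cQ f(v,\cdot)\|\,\|\phi_1\|$ and Cauchy--Schwarz in $v$; here the second condition in \eqref{eq:4} is exactly what guarantees $\phi_1\in L^2(\R)$, which is worth saying explicitly since it is the only place the hypothesis enters.
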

We refer to \cite{bardemadeviodo} for the proof. }

\section{Cone-adapted shearlets and Radon transforms }\label{sec:main}
\textcolor{black}{Equation~\eqref{reconstructionformulashearlet2} together with formula \eqref{first} allows to reconstruct an unknown signal $f$ from its unitary Radon transform $\cQ f$ but it is difficult to implement in applications since $\cQ$ involves both a limit and the pseudo-differential operator $\mathcal J$.
%since the shearlet coefficients $\mathcal{S}_{\psi} f(x,y,s,a)$ depend on $f$ 
%through its unitary Radon transform $\cQ f$ which involves both a limit and the pseudo-differential operator $\mathcal J$. 
Furthermore, in the reconstruction formula~\eqref{reconstructionformulashearlet2} the shearing 
parameter $s$ is allowed to range over $\R$ and this can give rise to the problems discussed above.} The aim of this paper is to obtain a reconstruction formula of the form \eqref{reconstructionghros}, i.e. where both the scale and the shearing parameters belong to compact intervals, where the shearlet coefficients depend on $f$ only through its Radon transform and do not involve the operator $\cJ$ applied to the signal.\\

%In this section we gain an inversion formula for the Radon transform exploiting equations \eqref{first}, \eqref{second} and the "shearlets on the cone construction" recalled in Section 2.\\
We fix an admissible vector $\psi$ of the form \eqref{eq:2} satisfying conditions \eqref{eq:4} \textcolor{black}{and such that $\psi_1$ satisfies the further condition
\begin{equation}
\label{newcondition}
\int_{\mathbb{R}}|\tau|^{2}|\mathcal{F}\psi_1(\tau)|^2\ {\rm d}\tau<+\infty.
\end{equation}
\begin{proposition}\label{propositionfirst}
For any $f\in L^1(\R^2)\cap L^2(\R^2)$ and $(x,y,s,a)\in \R^2\times\R\times\R^{\times}$,
\[
\mathcal{S}_{\psi}[P_{C}f](x,y,s,a)=|a|^{-\frac{3}{4}}\int_{-1}^1\mathcal{W}_{\chi_1}(\cR f(v,\cdot))(x+vy,a)\overline{\phi_2\left(\frac{v-s}{|a|^{1/2}}\right)}\ {\rm d}v,
\]
\[
\mathcal{S}^{\bf v}_{\psi^{\bf v}}[P_{C^{\bf v}}f](x,y,s,a)=|a|^{-\frac{3}{4}}\int_{-1}^1\mathcal{W}_{\chi_1}(\cR^{\bf v}f(v,\cdot))(vx+y,a)\overline{\phi_2\left(\frac{v-s}{|a|^{1/2}}\right)}\ {\rm d}v,
\]
where $\mathcal{F}\chi_1(\tau)=|\tau|\mathcal{F}\psi_{1}(\tau)$ and $\phi_2=\cF\psi_2$.
\end{proposition}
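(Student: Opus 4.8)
The plan is to obtain the stated identity by specialising Theorem~\ref{teocentrale} to the projected signal $P_C f$ and then rewriting the result in terms of the plain horizontal Radon transform $\cR f$ and the modified analyzing function $\chi_1$. Since $P_C f\in L^2(\R^2)$, Theorem~\ref{teocentrale} applies verbatim and gives
\[
\cS_\psi[P_Cf](x,y,s,a)=|a|^{-1/4}\int_\R \cW_{\phi_1}(\cQ[P_Cf](v,\cdot))(x+vy,a)\,\overline{\phi_2\!\left(\frac{v-s}{|a|^{1/2}}\right)}\,\D v,
\]
so everything reduces to understanding the slice $\cQ[P_Cf](v,\cdot)$ and then comparing the two wavelet transforms $\cW_{\phi_1}$ and $\cW_{\chi_1}$.

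First I would analyse the inner slice. Applying the second identity \eqref{fstg} of Corollary~\ref{cfst} to $P_Cf\in L^2(\R^2)$ yields
\[
\cF(\cQ[P_Cf](v,\cdot))(\tau)=|\tau|^{1/2}\,\cF f(\tau,\tau v)\,\chi_C(\tau,\tau v),
\]
and since along the slice $(\tau,\tau v)$ one has $|\xi_2/\xi_1|=|v|$, it follows that $\chi_C(\tau,\tau v)=\chi_{[-1,1]}(v)$ for a.e.\ $\tau$, so the slice vanishes for $|v|>1$; this is precisely what collapses the outer integral onto $[-1,1]$. On the range $|v|\le1$ I would then invoke the first identity \eqref{fst}, legitimate because $f\in L^1(\R^2)\cap L^2(\R^2)$, to replace $\cF f(\tau,\tau v)$ by $\cF(\cR f(v,\cdot))(\tau)$, arriving at $\cF(\cQ[P_Cf](v,\cdot))(\tau)=|\tau|^{1/2}\cF(\cR f(v,\cdot))(\tau)$. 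The structural point is that $\cR$ is applied only to $f$ (where $L^1\cap L^2$ guarantees a genuine $L^2$ slice) while $\cQ$ is applied to $P_Cf$ (where only $L^2$ is needed), so the two halves of Corollary~\ref{cfst} dovetail on $|v|\le1$.

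Next I would compare the two wavelet transforms on the Fourier side. Writing $\cW_{\phi_1}g(b,a)=\langle \cF g,\cF(W_{b,a}\phi_1)\rangle$ with $\cF(W_{b,a}\phi_1)(\tau)=|a|^{1/2}\E^{-2\pi i b\tau}\cF\phi_1(a\tau)$, and inserting $\cF\phi_1(a\tau)=|a\tau|^{1/2}\cF\psi_1(a\tau)$ together with the slice formula above, a direct computation gives
\[
\cW_{\phi_1}(\cQ[P_Cf](v,\cdot))(b,a)=|a|\int_\R|\tau|\,\cF(\cR f(v,\cdot))(\tau)\,\E^{2\pi i b\tau}\,\overline{\cF\psi_1(a\tau)}\,\D\tau.
\]
Running the same computation for $\cW_{\chi_1}(\cR f(v,\cdot))(b,a)$ with $\cF\chi_1(a\tau)=|a\tau|\cF\psi_1(a\tau)$ produces the identical integral, but multiplied by $|a|^{3/2}$ instead, whence
\[
\cW_{\phi_1}(\cQ[P_Cf](v,\cdot))(b,a)=|a|^{-1/2}\,\cW_{\chi_1}(\cR f(v,\cdot))(b,a).
\]
Here the role of the new hypothesis \eqref{newcondition} becomes visible: it is exactly $\chi_1\in L^2(\R)$, since $\norm{\chi_1}^2=\int_\R|\tau|^2|\cF\psi_1(\tau)|^2\D\tau$, which is what makes $\cW_{\chi_1}$ and the analyzing vector $W_{b,a}\chi_1$ legitimate. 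Substituting this relation into the displayed form of $\cS_\psi[P_Cf]$ and absorbing $|a|^{-1/2}$ into the prefactor turns $|a|^{-1/4}$ into $|a|^{-3/4}$ and yields the claimed formula; the vertical statement follows verbatim after exchanging the two coordinate axes, replacing $\cQ,\cR,C$ by $\cQ^{\bf v},\cR^{\bf v},C^{\bf v}$ and $\psi$ by $\psi^{\bf v}$.

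The individual computations are routine Fourier-multiplier manipulations; the step demanding the most care is the interchange described above, namely justifying that the two complementary versions of the Fourier slice theorem may be applied to \emph{different} functions ($\cQ$ to $P_Cf$, $\cR$ to $f$) and that the resulting slice identities genuinely coincide for a.e.\ $v\in[-1,1]$ and a.e.\ $\tau$. I would also record explicitly that all pairings above are bona fide $L^2$ inner products: $\cQ[P_Cf](v,\cdot)$ and $\cR f(v,\cdot)$ belong to $L^2(\R)$ for a.e.\ $v$ by Fubini and Corollary~\ref{cfst}, while $\phi_1,\chi_1\in L^2(\R)$ by \eqref{eq:4} and \eqref{newcondition}, so that Plancherel applies and no spurious convergence issues arise.
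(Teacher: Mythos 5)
Your proof is correct and follows essentially the same route as the paper: apply Theorem~\ref{teocentrale} to $P_{C}f$, use the two halves of Corollary~\ref{cfst} to identify $\cF(\cQ[P_{C}f](v,\cdot))(\tau)$ with $|\tau|^{1/2}\cF(\cR f(v,\cdot))(\tau)$ for $|v|\leq 1$ and zero otherwise (which collapses the $v$-integral to $[-1,1]$), and then convert $\cW_{\phi_1}$ of $\cQ[P_{C}f](v,\cdot)$ into $|a|^{-1/2}\cW_{\chi_1}$ of $\cR f(v,\cdot)$, with \eqref{newcondition} guaranteeing $\chi_1\in L^2(\R)$. The only cosmetic difference is that the paper obtains this last rescaling from the self-adjointness of $\cJ_0$ and the commutation relation $\cJ_0 W_{b,a}=|a|^{-1/2}W_{b,a}\cJ_0$, whereas you verify the identical relation by an explicit Plancherel computation on the Fourier side.
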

}
\begin{proof}
We take a function $f\in L^1(\R^2)\cap L^2(\R^2)$ and we consider its frequency projection $P_{C}f$ on the horizontal cone $C$ defined by \eqref{cones} and \eqref{horizontalprojection}. Since $P_{C}f$ belongs to $L^2(\R^2)$, we can apply formula \eqref{first} and we obtain 
\begin{equation}\label{eq:1}
\mathcal{S}_{\psi}[P_{C}f](x,y,s,a)=|a|^{-\frac{1}{4}}\int_{\mathbb{R}}\mathcal{W}_{\phi_1}(\cQ [P_{C}f](v,\cdot))(x+vy,a)\overline{\phi_2\left(\frac{v-s}{|a|^{1/2}}\right)}\ {\rm d}v,
\end{equation}
where $\phi_1$ is the admissible wavelet defined by \eqref{phi} and $\phi_2=\cF\psi_2$. We consider the functions $t\mapsto\cQ [P_{C}f](v,t)$ in equation \eqref{eq:1}. By Corollary~\ref{cfst} and the definition of $P_{C}f$, we have
\begin{equation}\label{Qprojection}
\cF(\cQ [P_{C}f](v,\cdot))(\tau)=|\tau|^{\frac{1}{2}}\cF(P_{C}f)(\tau,\tau v)=|\tau|^{\frac{1}{2}}\cF f(\tau,\tau v)\chi_{C}(\tau,\tau v),
\end{equation}
for almost every $(v,\tau)\in\R^2$. Furthermore, by the definition of the horizontal cone $C$, the function $\tau\mapsto\chi_{C}(\tau,\tau v)$ is identically one if $|v|\leq1$ and zero otherwise. Thus, \eqref{Qprojection} becomes
\begin{equation}\label{Qprojection2}
\cF(\cQ [P_{C}f](v,\cdot))(\tau)=
\begin{cases}
|\tau|^{\frac{1}{2}}\cF f(\tau,\tau v) & \text{if}\,\,\, |v|\leq1\\
0 & \text{if}\,\,\, |v|>1
\end{cases}.
\end{equation}
From now on we consider the case $|v|\leq1$. Since $f\in L^1(\R^2)\cap L^2(\R^2)$, Corollary~\ref{cfst} and equation \eqref{Qprojection2} imply that for almost all $v\in\R$, $\cR f(v,\cdot)\in L^2(\R)$ and  
\begin{equation}\label{Qprojection3}
\cF(\cQ [P_{C}f](v,\cdot))(\tau)=|\tau|^{\frac{1}{2}}\cF f(\tau,\tau v)=|\tau|^{\frac{1}{2}}\cF\cR f(v,\cdot)(\tau).
\end{equation}
Since $\tau\mapsto\cF(\cQ [P_{C}f](v,\cdot))(\tau)\in L^2(\R)$ for almost all $v\in\R$, equality \eqref{Qprojection3}
implies that $\cR f(v,\cdot)$ is in the domain of the differential operator $\cJ_0:L^2(\R)\to L^2(\R)$ defined as
\begin{equation}\label{rieszoperator}
\mathcal{F}\mathcal{J}_0g(\tau)=|\tau|^{\frac{1}{2}}\mathcal{F}g(\tau),
\end{equation}
and, by the definition of $\mathcal{J}_0$, 
\begin{align*}
\cQ [P_{C}f](v,\cdot)&=\cJ_0\cR f(v,\cdot).
%=\cI_0\cR^{\rm aff}f(v,\bullet)\chi_{C^{(h)}}(\tau,\tau v).
\end{align*}
Since $\cJ_0$ is a self-adjoint operator, the wavelet coefficients in \eqref{eq:1} become
\begin{align*}
\mathcal{W}_{\phi_1}(\cQ [P_{C}f](v,\cdot))(x+vy,a)&=\langle\cQ [P_{C}f](v,\cdot),W_{x+vy,a}\phi_1\rangle_2\\
&=\langle\cJ_0\cR f(v,\cdot),W_{x+vy,a}\phi_1\rangle_2\\
&=\langle\cR f(v,\cdot),\cJ_0W_{x+vy,a}\phi_1\rangle_2\\
&=|a|^{-\frac{1}{2}}\langle\cR f(v,\cdot),W_{x+vy,a}\cJ_0\phi_1\rangle_2\\
&=|a|^{-\frac{1}{2}}\mathcal{W}_{\cJ_0\phi_1}(\cR f(v,\cdot))(x+vy,a),
\end{align*}
\textcolor{black}{by taking into account that
\[
\cJ_0W_{b,a}=|a|^{-\frac{1}{2}}W_{b,a}\cJ_0,
\] 
for any $(b,a)\in\R\times\R^\times$. We set $\chi_1=\cJ_0\phi_1=\cJ_0^2\psi_1$, that is  
\begin{equation*}
\mathcal{F}\chi_1(\tau)=|\tau|\mathcal{F}\psi_{1}(\tau),
\end{equation*}
which is well-defined since $\psi_1$ satisfies \eqref{newcondition}.}
From the above calculations, we can conclude that for almost every $v\in\R$
\[
\mathcal{W}_{\phi_1}(\cQ [P_{C}f](v,\cdot))(x+vy,a)=
\begin{cases}
|a|^{-\frac{1}{2}}\mathcal{W}_{\chi_1}(\cR f(v,\cdot))(x+vy,a) & |v|\leq1\\
0 & |v|>1
\end{cases}
\]
and formula \eqref{eq:1} becomes 
\begin{align}\label{coeffh}
\mathcal{S}_{\psi}[P_{C}f](x,y,s,a)=|a|^{-\frac{3}{4}}\int_{-1}^1\mathcal{W}_{\chi_1}(\cR f(v,\cdot))(x+vy,a)\overline{\phi_2\left(\frac{v-s}{|a|^{1/2}}\right)}\ {\rm d}v.
%=|a|^{-\frac{1}{4}}\int_{\mathbb{R}}\mathcal{W}_{\phi_1}(\cQ [P_{C^{(h)}}f](v,\cdot))(x+vy,a)\overline{\phi_2\left(\frac{v-s}{|a|^{1/2}}\right)}\ {\rm d}v\\
%&=|a|^{-\frac{1}{4}}\int_{|v|\leq1}\mathcal{W}_{\phi_1}(\cQ [P_{C^{(h)}}f](v,\bullet))(x+vy,a)\overline{\phi_2\left(\frac{v-s}{|a|^{1/2}}\right)}\ {\rm d}v\\
\end{align}
%Finally, if we suppose supp$\phi_2\subseteq[-1,1]$, we obtain 
%\begin{align*}
%\mathcal{S}_{\psi}[P_{C^{h}}f](x,y,s,a)&=|a|^{-\frac{3}{4}}\int_{I}\mathcal{W}_{\chi_1}(\cR f(v,\cdot))(x+vy,a)\overline{\phi_2\left(\frac{v-s}{|a|^{1/2}}\right)}\ {\rm d}v,
%\end{align*}
%where $I=[-1,1]\cap[s-|a|^{1/2},s+|a|^{1/2}]$. 
%\begin{remark}
%Note that if $s\not\in[-1,1]$, the only shearlet coefficients which don't go to zero are those with the scale parameter $a$ sufficiently big. This is consistent with the fact that if we consider the frequency projection on the cone $C^{h}$ of a function $f\in L^2(\R^2)$ and we look at its shearlet coefficients 
%\begin{align*}
%\mathcal{S}_{\psi}[P_{C^{h}}f](x,y,s,a)&=\langle P_{C^{h}}f, \psi_{x,y,s,a}\rangle=\langle \cF(P_{C^{h}}f), \cF\psi_{x,y,s,a}\rangle\\
%&=\langle \cF f\, \, \chi_{C^{h}}, \cF\psi_{x,y,s,a}\rangle=\int_{C^{h}}\cF f\, \, \cF\psi_{x,y,s,a}{\rm d}\xi
%\end{align*}
%and the support of $\cF\psi^{h}_{x,y,s,a}$ is 
%\[
%C_{s,a}=\left\{(\xi_1,\xi_2)\in\R^2:\left|\frac{\xi_2}{\xi_1}-s\right|\leq |a|^{\frac{1}{2}}\right\}
%\]
%That is, the support of $\cF\psi_{x,y,s,a}$ is a cone oriented along a line of slope $s$, which narrows as $a\to0$. It is clear that, the non-zero shearlet coefficients are those for which the cones $C^{h}$ and $C_{s,a}$ intersect. It is easy to see that it occurs when $s\in[-1,1]$ for any $a\in\R^*$ and if $s\not\in[-1,1]$ for those $a$ for which is satisfied the condition
%\[
%|s-1|\leq |a|^{\frac{1}{2}}.
%\]
%\end{remark}
Using the same arguments as in the case of the horizontal cone, we obtain the following formula for the vertical shearlet transform
\begin{align}\label{coeffv}
\mathcal{S}^{\bf v}_{\psi^{\bf v}}[P_{C^{\bf v}}f](x,y,s,a)&=|a|^{-\frac{3}{4}}\int_{-1}^1\mathcal{W}_{\chi_1}(\cR^{\bf v}f(v,\cdot))(vx+y,a)\overline{\phi_2\left(\frac{v-s}{|a|^{1/2}}\right)}\ {\rm d}v.
\end{align}
This completes the proof.
\end{proof}

It is worth observing that  formulas \eqref{coeffh} and \eqref{coeffv} turn the action of the frequency projections $P_{C}$  and $P_{C^{\bf v}}$ on $f$ into the restriction of the interval over which we integrate the directional variable $v$
and so, \eqref{coeffh} and \eqref{coeffv} eliminate the need to perform a frequency projection on $f$ prior to the analysis. Furthermore, as a consequence, the shearlet coefficients $\mathcal{S}_{\psi}[P_{C}f](b,s,a)$ and $\mathcal{S}^{\bf v}_{\psi^{\bf v}}[P_{C^{\bf v}}f](b,s,a)$ depend on $f$ through its limited angle (affine) horizontal and vertical Radon transforms $\cR f(v,t)$ and $\cR^{\bf v}f(v,t)$, with $|v|\leq1$, respectively.

%Furthermore, if supp$\phi_2\subseteq[-1,1]$, we obtain 
%\begin{align*}
%\mathcal{S}^v_{\psi^{v}}[P_{C^{v}}f](x,y,s,a)&=|a|^{-\frac{3}{4}}\int_{I}\mathcal{W}_{\chi_1}(\cR'f(v,\cdot))(vx+y,a)\overline{\phi_2\left(\frac{v-s}{|a|^{1/2}}\right)}\ {\rm d}v.
%\end{align*}
%where $I=[-1,1]\cap[s-|a|^{1/2},s+|a|^{1/2}]$.\\
Finally, let us show that also the first integral in the right hand side of reconstruction formula \eqref{reconstructionghros} may be expressed in terms of $\cR f$ only. 
\begin{proposition}\label{hcoeff}
For any $f\in L^1(\R^2)\cap L^2(\R^2)$ and for any smooth function $g$ in $L^1(\R^2)\cap L^2(\R^2)$ we have that 
\begin{align*}
\langle f,T_{b}g\rangle=\int_{\R}\langle\mathcal{R}f(v,\cdot),T_{n(v)\cdot b}\zeta(v,\cdot)\rangle{\rm d}v,
\end{align*}
for any $b\in\R^2$, where $\zeta=\cJ_0^2\mathcal{R}g$ and $n(v)={^t\!(1,v)}$.

\end{proposition}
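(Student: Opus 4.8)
The plan is to transport the pairing $\langle f,T_b g\rangle$ into the Radon domain by exploiting the unitarity of $\cQ$. Since $\cQ$ is unitary on $L^2(\R^2)$ and $T_b g\in L^1(\R^2)\cap L^2(\R^2)$, one has
\[
\langle f,T_b g\rangle=\langle \cQ f,\cQ(T_b g)\rangle=\int_{\R}\langle \cQ f(v,\cdot),\cQ(T_b g)(v,\cdot)\rangle\,\D v ,
\]
where the second equality is Fubini's theorem applied to the product structure of $L^2(\R^2)$ in the coordinates $(v,t)$, legitimate because the integrand lies in $L^1(\R^2)$ by Cauchy--Schwarz. The first task is therefore to identify the single slice $\cQ(T_b g)(v,\cdot)$.

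Next I would compute this slice via the Fourier slice theorem \eqref{fstg}. Writing $b=(b_1,b_2)$ and using $\cF(T_b g)(\xi)=\E^{-2\pi i\,\xi\cdot b}\cF g(\xi)$, evaluation along the slice $\xi=(\tau,\tau v)$ produces the phase $\E^{-2\pi i\tau(b_1+v b_2)}=\E^{-2\pi i\tau\,(n(v)\cdot b)}$, so that
\[
\cF\bigl(\cQ(T_b g)(v,\cdot)\bigr)(\tau)=\E^{-2\pi i\tau\,(n(v)\cdot b)}\,\cF\bigl(\cQ g(v,\cdot)\bigr)(\tau).
\]
Since multiplication by $\E^{-2\pi i\tau c}$ on the Fourier side is exactly the one-dimensional translation $T_c$ in the variable $t$, this identifies $\cQ(T_b g)(v,\cdot)=T_{n(v)\cdot b}\,\cQ g(v,\cdot)$ for almost every $v$. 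This is the step that turns a single planar translation by $b$ into a $v$-dependent one-dimensional translation by $n(v)\cdot b$ on each slice, and it is the geometric heart of the statement.

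Finally I would remove the operator $\cQ$ in favour of $\cR$ and push both factors of $\cJ_0$ onto $g$. By Corollary~\ref{cfst} and \eqref{rieszoperator}, for almost every $v$ one has $\cQ f(v,\cdot)=\cJ_0\cR f(v,\cdot)$ and $\cQ g(v,\cdot)=\cJ_0\cR g(v,\cdot)$, the memberships $\cR f(v,\cdot),\cR g(v,\cdot)\in\mathrm{dom}(\cJ_0)$ holding a.e.\ because $\cQ f,\cQ g\in L^2(\R^2)$. Using that $\cJ_0$ is self-adjoint and commutes with the translations $T_c$ (both being Fourier multipliers in $t$), the slice pairing becomes
\[
\langle \cJ_0\cR f(v,\cdot),\,T_{n(v)\cdot b}\,\cJ_0\cR g(v,\cdot)\rangle=\langle \cR f(v,\cdot),\,T_{n(v)\cdot b}\,\cJ_0^2\cR g(v,\cdot)\rangle=\langle \cR f(v,\cdot),\,T_{n(v)\cdot b}\,\zeta(v,\cdot)\rangle ,
\]
recalling $\zeta=\cJ_0^2\cR g$, and integrating over $v$ yields the claimed identity. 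The main obstacle is the domain bookkeeping for the unbounded operator: one must guarantee $\cR g(v,\cdot)\in\mathrm{dom}(\cJ_0^2)$ for almost every $v$, so that $\zeta(v,\cdot)\in L^2(\R)$ and the self-adjointness pairing $\langle \cJ_0 a,h\rangle=\langle a,\cJ_0 h\rangle$ may be applied with $h=T_{n(v)\cdot b}\cJ_0\cR g(v,\cdot)$. This is exactly where the smoothness and integrability hypotheses on $g$ enter, through the decay of $\cF g$ controlling $\int_{\R}|\tau|^2|\cF g(\tau,\tau v)|^2\,\D\tau$.
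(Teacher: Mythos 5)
Your proof is correct and follows essentially the same route as the paper: unitarity of $\cQ$, slice-wise identification $\cQ(\cdot)(v,\cdot)=\cJ_0\cR(\cdot)(v,\cdot)$, the translation intertwining $T_b\mapsto T_{n(v)\cdot b}$, and self-adjointness of $\cJ_0$ to move both factors onto $g$, with the same domain check via $\int_\R|\tau|^2|\cF g(\tau,\tau v)|^2\,\D\tau<+\infty$. The only cosmetic difference is that you derive the translation behaviour directly from the Fourier slice theorem, whereas the paper invokes the known translation covariance of the Radon transform and then commutes $\cJ_0$ past the translation.
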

\begin{proof}
We take a function $f\in L^1(\R^2)\cap L^2(\R^2)$ and we consider a smooth function $g\in L^1(\R^2)\cap L^2(\R^2)$.  We readily derive
\begin{align*}
\langle f,T_{b}g\rangle=&\langle{\mathcal{Q}}f,{\mathcal{Q}}T_{b}g\rangle
=\int_{\R}\langle{\mathcal{Q}}f(v,\cdot),{\mathcal{Q}}T_{b}g(v,\cdot)\rangle{\rm d}v.
\end{align*}
Since $f$ and $g$ are in $L^1(\R^2)\cap L^2(\R^2)$, Corollary~\ref{cfst} implies that for almost all $v\in\R$, $\cR f(v,\cdot)$ and $\cR g(v,\cdot)$ are square-integrable functions and
\begin{align*}
\langle f,T_{b}g\rangle=&\int_{\R}\langle\mathcal{J}_0\mathcal{R}f(v,\cdot),\mathcal{J}_0\mathcal{R}T_{b}g(v,\cdot)\rangle{\rm d}v,
\end{align*}
where we recall that $\cJ_0$ is the differential operator defined by \eqref{rieszoperator}.
By the behavior of the horizontal Radon transform under translations \cite{helgason99} and since the operator $\cJ_0$ commutes with translations, denoting $n(v)={^t\!(1,v)}$, we have that 

\begin{align*}
\cJ_0\mathcal{R}T_{b}g(v,t)=\cJ_0(I\otimes T_{n(v)\cdot b})\mathcal{R}g(v,t)=(I\otimes T_{n(v)\cdot b})\cJ_0\mathcal{R}g(v,t).
\end{align*}
We need to choose $g$ in such a way that $\cJ_0\mathcal{R}g(v,\cdot)$ is in the domain of the operator $\cJ_0$ for almost every $v\in\R$. Assuming this, the same property holds true for $T_{n(v)\cdot b}\cJ_0\mathcal{R}g(v,\cdot)$ by the translation invariance of $\text{dom}\,\cJ_0$ and we obtain  
\begin{align}\label{Wcoeff1}
\nonumber\langle f,T_{b}g\rangle=&\int_{\R}\langle\cJ_0\mathcal{R}f(v,\cdot),T_{n(v)\cdot b}\cJ_0\mathcal{R}g(v,\cdot)\rangle{\rm d}v\\
=&\int_{\R}\langle\mathcal{R}f(v,\cdot),T_{n(v)\cdot b}\cJ_0^2\mathcal{R}g(v,\cdot)\rangle{\rm d}v.
\end{align}
It is worth observing that the extra assumption that $\cJ_0\mathcal{R}g(v,\cdot)$ is in the domain of $\cJ_0$ for almost every $v\in\R$ is always satisfied. Indeed, by the definition of $\cJ_0$ and Corollary~\ref{cfst}
\begin{align*}
\int_{\R}|\tau||\cF\cJ_0\cR g(v,\cdot)(\tau)|^2{\rm d}\tau&=\int_{\R}|\tau|^2|\cF\cR g(v,\cdot)(\tau)|^2{\rm d}\tau\\
&=\int_{\R}|\tau|^2|\cF g(\tau,\tau v)|^2{\rm d}\tau<+\infty
\end{align*} 
since by definition $g$ is a smooth function. We set $\zeta(v,\tau)=\cJ_0^2\mathcal{R}g(v,\tau)$, that is 
\[
\cF\zeta(v,\cdot)(\tau)=|\tau|\cF g(\tau,\tau v),
\]
so that \eqref{Wcoeff1} becomes 
\begin{align}\label{Wcoeff2}
\langle f,T_{b}g\rangle=\int_{\R}\langle\mathcal{R}f(v,\cdot),T_{n(v)\cdot b}\zeta(v,\cdot)\rangle{\rm d}v.
\end{align}
Furthermore, if possible, we choose $g$ of the form 
\[
\cF g(\xi_1,\xi_2)=\cF g_1(\xi_1)\cF g_2\left(\frac{\xi_2}{\xi_1}\right),
\]
with $g_1\in L^1(\R^2)\cap L^2(\R^2)$ satisfying the condition 
\begin{equation}\label{conditionh}
\int_{\mathbb{R}}|\tau|^{2}|\mathcal{F}g_1(\tau)|^2\ {\rm d}\tau<+\infty
\end{equation}
and $g_2\in L^1(\R^2)\cap L^2(\R^2)$. Under these hypotheses, \eqref{Wcoeff2} becomes 
\begin{align*}
\langle f,T_{b}g\rangle=\int_{\R}\langle\mathcal{R}f(v,\cdot),T_{n(v)\cdot b}\zeta_1\rangle \zeta_2(v){\rm d}v,
\end{align*}
where $\zeta_1=\cJ_o^2 g_1$, which is well-defined by \eqref{conditionh}, and $\zeta_2=\cF g_2$.
%This is true because by the definition of $\cJ_0$
%\begin{align*}
%\int_{\R}\frac{|\mathcal{F}\cJ_0^2\mathcal{R}
%W(v,\cdot)(\tau)|^2}{|\tau|}{\rm d}\tau=\int_{\R}|\mathcal{F}\cJ_0\mathcal{R}W(v,\cdot)(\tau)|^2{\rm d}\tau<+\infty.
%\end{align*}
\end{proof}
Theorem~\ref{reconstructionteo1} and formulas \eqref{coeffh}, \eqref{coeffv} and \eqref{Wcoeff2} give our main result. We recall that $\psi$ is an admissible vector of the form \eqref{eq:2} satisfying conditions \eqref{eq:4} and such that $\psi_1$ satisfies \eqref{newcondition}.
Furthermore we require that $\psi$ is smooth with infinitely directional vanishing moments in the $x_1$-direction \cite{gr11}.
\begin{theorem}\label{reconstructionteo}
For any $f\in L^1(\R^2)\cap L^2(\R^2)$, we have the reconstruction formula 
\begin{align}\label{eq:10}
\nonumber\|f\|^2&=\int_{\R^2}|\langle f,T_{b}g\rangle|^2\ {\rm d}b+\int_{-1}^{1}\int_{-2}^{2}\int_{\R^2}|\mathcal{S}_{\psi}[P_{C}f](b,s,a)|^2\ {\rm d}b{\rm d}s\frac{{\rm d}a}{|a|^3}\\
&+\int_{-1}^{1}\int_{-2}^{2}\int_{\R^2}|\mathcal{S}^{\bf v}_{\psi^{\bf v}}[P_{C^{\bf v}}f](b,s,a)|^2\ {\rm d}b{\rm d}s\frac{{\rm d}a}{|a|^3},
\end{align}
where $g$ is a smooth function in $L^1(\R^2)\cap L^2(\R^2)$ such that \eqref{W} holds true and for any $b=(x,y)\in\R^2$, $s\in\R$, $a\in\R^{\times}$
\begin{align*}
&\mathcal{S}_{\psi}[P_{C}f](x,y,s,a)=|a|^{-\frac{3}{4}}\int_{-1}^{1}\mathcal{W}_{\chi_1}(\cR f(v,\cdot))(x+vy,a)\overline{\phi_2\left(\frac{v-s}{|a|^{1/2}}\right)}\ {\rm d}v,\\
&\mathcal{S}^{\bf v}_{\psi^{\bf v}}[P_{C^{\bf v}}f](x,y,s,a)=|a|^{-\frac{3}{4}}\int_{-1}^1\mathcal{W}_{\chi_1}(\cR^{\bf v}f(v,\cdot))(vx+y,a)\overline{\phi_2\left(\frac{v-s}{|a|^{1/2}}\right)}\ {\rm d}v,\\
&\langle f,T_{(x,y)}g\rangle=\int_{\R}\langle\mathcal{R}f(v,\cdot),T_{x+vy}\zeta(v,\cdot)\rangle{\rm d}v,
\end{align*}
where $\zeta=\cJ_0^2\mathcal{R}g$, $\mathcal{F}\chi_1(\tau)=|\tau|\mathcal{F}\psi_{1}(\tau)$, $\phi_2=\cF\psi_2$ .
\end{theorem}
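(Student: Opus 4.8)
The plan is to assemble \eqref{eq:10} from the three results already in hand, since every ingredient of the identity has been computed separately. The starting point is Theorem~\ref{reconstructionteo1}: because $f\in L^1(\R^2)\cap L^2(\R^2)$ is in particular an element of $L^2(\R^2)$, and because $\psi$ is admissible of the form \eqref{eq:2}--\eqref{eq:4}, smooth, with infinitely many directional vanishing moments in the $x_1$-direction, while $g$ is chosen so that \eqref{W} holds, the norm identity \eqref{reconstructionghros} applies verbatim to $f$. This already furnishes the decomposition of $\|f\|^2$ into the windowed term $\int_{\R^2}|\scal{f}{T_bg}|^2\,\D b$ and the two cone-shearlet terms with $s$ and $a$ ranging over the compact intervals $[-2,2]$ and $[-1,1]$.

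It then remains only to rewrite each of the three integrands in terms of the Radon data, and here I would simply quote the earlier propositions. For the two shearlet contributions this is exactly Proposition~\ref{propositionfirst}: formulas \eqref{coeffh} and \eqref{coeffv} express $\mathcal{S}_{\psi}[P_{C}f](x,y,s,a)$ and $\mathcal{S}^{\bf v}_{\psi^{\bf v}}[P_{C^{\bf v}}f](x,y,s,a)$ through the limited-angle horizontal and vertical Radon transforms $\cR f(v,\cdot)$ and $\cR^{\bf v}f(v,\cdot)$, $|v|\le 1$, composed with the wavelet transform $\mathcal{W}_{\chi_1}$, where $\cF\chi_1(\tau)=|\tau|\cF\psi_1(\tau)$ is well-defined by condition \eqref{newcondition}. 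For the windowed term I would invoke Proposition~\ref{hcoeff}, whose formula \eqref{Wcoeff2} rewrites $\scal{f}{T_bg}$ as $\int_{\R}\scal{\cR f(v,\cdot)}{T_{n(v)\cdot b}\zeta(v,\cdot)}\,\D v$ with $\zeta=\cJ_0^2\cR g$ and $n(v)={}^t\!(1,v)$. Substituting these three expressions into \eqref{reconstructionghros} produces \eqref{eq:10} together with the stated coefficient formulas.

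The only point requiring genuine care is the compatibility of the hypotheses on the window $g$, since Theorem~\ref{reconstructionteo1} asks for $g\in C^\infty(\R^2)$ solving \eqref{W}, whereas Proposition~\ref{hcoeff} additionally requires $g\in L^1(\R^2)\cap L^2(\R^2)$ and $\cJ_0\cR g(v,\cdot)\in\mathrm{dom}\,\cJ_0$ for a.e.\ $v$. I would resolve this by observing that \eqref{W} prescribes only $|\cF g|$, leaving the phase free, and that the two nonnegative cone integrals saturate to $1$ at high frequencies; since $C\cup C^{\bf v}=\R^2$, this forces $\cF g$ to vanish outside a neighbourhood of the origin, so the low-pass window $g$ supplied by Theorem~\ref{reconstructionteo1} is essentially band-limited and therefore lies in $L^1(\R^2)\cap L^2(\R^2)$. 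The domain condition is then automatic: the estimate displayed just after \eqref{Wcoeff1} shows $\int_{\R}|\tau|^2|\cF g(\tau,\tau v)|^2\,\D\tau<+\infty$ for smooth $g$, which is precisely membership of $\cJ_0\cR g(v,\cdot)$ in $\mathrm{dom}\,\cJ_0$.

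In summary, the proof is an assembly: the norm identity is a direct application of Theorem~\ref{reconstructionteo1} to $f\in L^2(\R^2)$, the coefficient formulas are quotations of Proposition~\ref{propositionfirst} and Proposition~\ref{hcoeff}, and the one substantive verification is that a single admissible window $g$ simultaneously satisfies \eqref{W} and the smoothness and integrability demanded by Proposition~\ref{hcoeff}. I expect this window-compatibility check to be the main obstacle, while the remaining steps are immediate.
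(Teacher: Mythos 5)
Your proposal is correct and is essentially the paper's own proof: the paper's argument is precisely this one-line assembly of Theorem~\ref{reconstructionteo1} with Propositions~\ref{propositionfirst} and~\ref{hcoeff}, substituting formulas \eqref{coeffh}, \eqref{coeffv} and \eqref{Wcoeff2} for the three coefficients. The only caveat is that your window-compatibility check is not actually required (the theorem's hypotheses already posit a smooth $g\in L^1(\R^2)\cap L^2(\R^2)$ satisfying \eqref{W}), and your claim that \eqref{W} forces $\cF g$ to vanish outside a neighbourhood of the origin is imprecise---the cone integrals approach $1$ only at rate $O(|\xi|^{-N})$ (as in the paper's proof of Theorem~\ref{reconstructionteofinal}), so $\cF g$ decays rapidly at infinity but need not be compactly supported; this rapid decay, not band-limitedness, is what places $g$ in $L^1(\R^2)\cap L^2(\R^2)$.
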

\begin{proof}
The proof follows immediately by Theorem~\ref{reconstructionteo1} and Propositions~\ref{propositionfirst}~and~\ref{hcoeff}.
\end{proof}
This theorem gives an alternative reproducing formula for any $f\in L^1(\R^2)\cap L^2(\R^2)$ in which, by the "shearlets on the cone" construction, the scale and shearing parameters range over compact sets and, by Propositions~\ref{propositionfirst}~and~\ref{hcoeff}, the coefficients depend on $f$ only through its Radon transform. Therefore equation \eqref{eq:10} allows to reconstruct an unknown signal $f$ from its Radon transform by computing the family of coefficients 
$\{\langle f,T_{b}g\rangle,\mathcal{S}_{\psi}[P_{C}f](b,s,a),\mathcal{S}^v_{\psi^{v}}[P_{C^{\bf v}}f](b,s,a)\}_{b\in\R^2,s\in\R,a\in\R^{\times}}$ by means of Theorem~\ref{reconstructionteo}. It is worth observing that the different contributions in \eqref{eq:10} with $\cR f(v,t)$ and $\cR^{\bf v}f(v,t)$, $|v|\leq1$, reconstruct the frequency projections $P_Cf$ and $P_{C^{\bf v}}f$, respectively.
\section{Generalizations}\label{sec:final}\begin{figure}[h]
\centering
	%\sidecaption[t]
	% Use the relevant command for your figure-insertion program
	% to insert the figure file.
	% For example, with the graphicx style use
	\includegraphics[scale=.5]{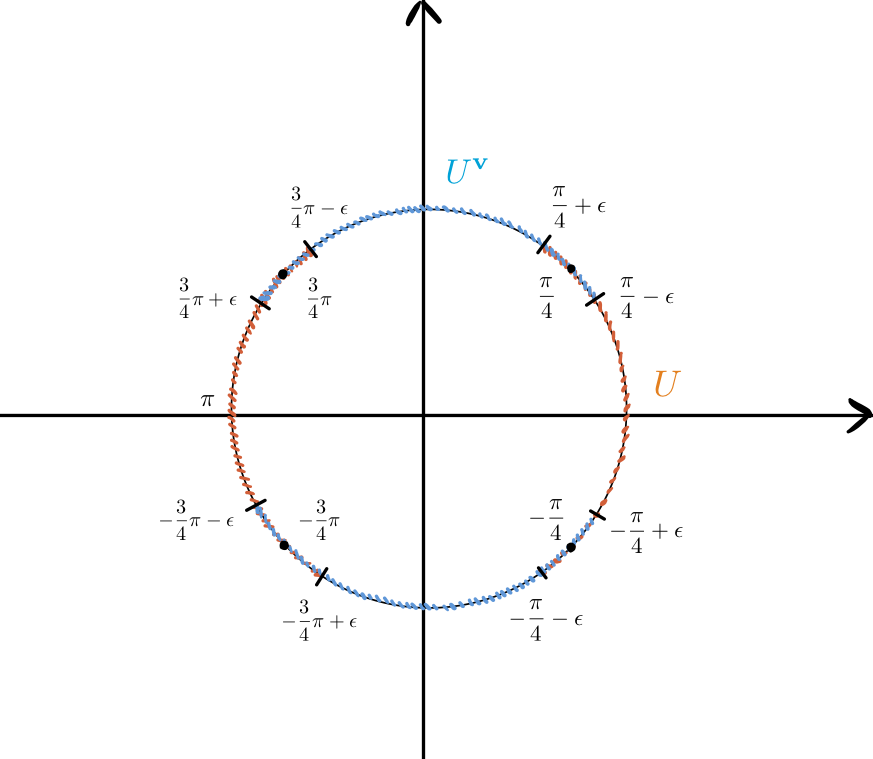}
	%
	% If no graphics program available, insert a blank space i.e. use
	%\picplace{5cm}{2cm} % Give the correct figure height and width in cm
	%
	\caption{The open cover $\{U,U^{\bf v}\}$ of the unit circle in the plane $S^1\simeq (-\pi,\pi]$.}
	\label{JSfig:3}       % Give a unique label
\end{figure} 
A disadvantage in formula \eqref{reconstructionghros}, and therefore in formula \eqref{eq:10}, is that the frequency projections $P_{C}$ and $P_{C^{\bf v}}$ performed on $f$ can lead to artificially slow decaying shearlet coefficients.
In order to avoid this problem we consider an open cover $\{U,U^{\bf v}\}$ of the unit circle in the plane $S^1\simeq (-\pi,\pi]$, where 
\[
\begin{split}
U=&(-\pi, -\frac{3}{4}\pi+\epsilon)\cup(-\frac{\pi}{4}-\epsilon,\frac{\pi}{4}+\epsilon)\cup(\frac{3}{4}\pi-\epsilon,\pi],\\
&U^{\bf v}=(-\frac{3}{4}\pi-\epsilon,-\frac{\pi}{4}+\epsilon)\cup(\frac{\pi}{4}-\epsilon,\frac{3}{4}\pi+\epsilon).
\end{split}
\]
Then, there exist even functions $\varphi,\varphi^{\bf v}\in C^{\infty}((-\pi,\pi])$ such that $\text{supp}\,\varphi\subseteq U$, $\text{supp}\,\varphi^{\bf v}\subseteq U^{\bf v}$
and $\varphi(\theta)^2+\varphi^{\bf v}(\theta)^2=1$ for all $\theta\in (-\pi,\pi]$, see \cite{lani07}. For any $\xi\in\R^2\setminus\{0\}$, we denote by $\theta_{\xi}\in(-\pi,\pi]$
the angle corresponding to $\xi/|\xi|\in S^1$ by the canonical isomorphism $S^1\simeq (-\pi,\pi]$. Then, we define the functions $\Phi,\,\Phi^{\bf v}\in C^{\infty}(\R^2\setminus\{0\})$ by
\[
\Phi(\xi)=\varphi(\theta_{\xi}),\qquad \Phi^{\bf v}(\xi)=\varphi^{\bf v}(\theta_{\xi}).
\] 
It is easy to verify that $\text{supp}\,\Phi=\left\{(\xi_1,\xi_2)\in\R^2:|\xi_2/\xi_1|\leq\tan{(\frac{\pi}{4}+\epsilon)}\right\}$, $\text{supp}\,\Phi^{\bf v}=\left\{(\xi_1,\xi_2)\in\R^2:|\xi_1/\xi_2|\leq\cot{(\frac{\pi}{4}-\epsilon)}\right\}$
and $\Phi(\xi)^2+\Phi^{\bf v}(\xi)^2=1$ for all $\xi\in\R^2\setminus\{0\}$.
We define the operators $L\colon L^2(\R^2)\to L^2(\R^2)$ and $L^{\bf v}\colon L^2(\R^2)\to L^2(\R^2)$ as follows
\begin{equation*}
\cF(Lf)(\xi)=\cF f(\xi)\Phi(\xi)
\end{equation*}
and
\begin{equation*}
\cF(L^{\bf v}f)(\xi)=\cF f(\xi)\Phi^{\bf v}(\xi).
\end{equation*}
We recall that $\psi$ is an admissible vector of the form \eqref{eq:2} satisfying conditions \eqref{eq:4} and such that $\psi_1$ satisfies \eqref{newcondition}.
%Furthermore we require that $\psi$ is smooth with infinitely directional vanishing moments in the $x_1$-direction \cite{gr11}.
Using analogous computations as in Section~\ref{sec:main}, it is possible to show that for any $f\in L^1(\R^2)\cap L^2(\R^2)$ and $(x,y,s,a)\in \R^2\times\R\times\R^{\times}$
\begin{align}\label{Th}
\nonumber\mathcal{S}_{\psi}[Lf]&(x,y,s,a)\\
&=|a|^{-\frac{3}{4}}\int_{\R}\mathcal{W}_{\chi_1}(\cR f(v,\cdot))(x+vy,a)\overline{\phi_2\left(\frac{v-s}{|a|^{1/2}}\right)}\varphi(\arctan{v})\ {\rm d}v,
\end{align}
\begin{align}\label{Tv}
\nonumber\mathcal{S}^{\bf v}_{\psi^{\bf v}}[L^{\bf v}f]&(x,y,s,a)\\
&=|a|^{-\frac{3}{4}}\int_{\R}\mathcal{W}_{\chi_1}(\cR^{\bf v}f(v,\cdot))(vx+y,a)\overline{\phi_2\left(\frac{v-s}{|a|^{1/2}}\right)}\varphi^{\bf v}\left(\arctan{\frac{1}{v}}\right)\ {\rm d}v,
\end{align}
where $\mathcal{F}\chi_1(\tau)=|\tau|\mathcal{F}\psi_{1}(\tau)$ and $\phi_2=\cF\psi_2$.
Furthermore, following the proof of Theorem 3 in \cite[Chapter 2]{kula12}, it is possible to derive a reconstruction formula of the form \eqref{eq:10} in this new setup. 
\begin{theorem}\label{reconstructionteofinal}
For any $f\in L^1(\R^2)\cap L^2(\R^2)$, we have the reconstruction formula 
\begin{align}\label{reconstructionghros2}
\nonumber\|f\|^2&=\int_{\R^2}|\langle f,T_{b}g\rangle|^2\ {\rm d}b+\int_{-1}^{1}\int_{-2}^{2}\int_{\R^2}|\mathcal{S}_{\psi}[Lf](b,s,a)|^2\ {\rm d}b{\rm d}s\frac{{\rm d}a}{|a|^3}\\
&+\int_{-1}^{1}\int_{-2}^{2}\int_{\R^2}|\mathcal{S}^{\bf v}_{\psi^{\bf v}}[L^{\bf v}f](b,s,a)|^2\ {\rm d}b{\rm d}s\frac{{\rm d}a}{|a|^3},
\end{align}
where $g$ is a smooth function in $L^1(\R^2)\cap L^2(\R^2)$ such that for all $\xi\in\R^2$
\begin{align}\label{gcondition}
\nonumber|\cF g(\xi)|^2&+\Phi(\xi)^2\int_{-1}^{1}\int_{-2}^{2}|\cF\psi(A_a{^t\!N_s}\xi)|^2\ {\rm d}s\frac{{\rm d}a}{a^{3/2}}\\
&+\Phi^{\bf v}(\xi)^2\int_{-1}^{1}\int_{-2}^{2}|\cF\psi^{\bf v}(\tilde{A_a}N_s\xi)|^2\ {\rm d}s\frac{{\rm d}a}{a^{3/2}}=1
\end{align}
and the coefficients in \eqref{reconstructionghros2} are given by \eqref{Th}, \eqref{Tv} and \eqref{Wcoeff2}.
\end{theorem}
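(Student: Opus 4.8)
The plan is to follow the architecture of the proof of Theorem~\ref{reconstructionteo}. The statement decouples into two independent pieces: (i) a resolution of the identity of the form \eqref{reconstructionghros2}, phrased purely in terms of the window coefficients $\langle f,T_b g\rangle$ and the shearlet coefficients $\mathcal{S}_\psi[Lf]$, $\mathcal{S}^{\bf v}_{\psi^{\bf v}}[L^{\bf v}f]$; and (ii) the three representation formulas \eqref{Th}, \eqref{Tv} and \eqref{Wcoeff2}, which rewrite those coefficients through the affine Radon transforms $\cR f$ and $\cR^{\bf v}f$. Granting both, the theorem is immediate, exactly as in Theorem~\ref{reconstructionteo}: step (ii) simply substitutes the Radon expressions into the coefficients of step (i). Thus the genuine content is to prove the identity \eqref{reconstructionghros2} with the smooth localizations $L$, $L^{\bf v}$ replacing the sharp projections $P_C$, $P_{C^{\bf v}}$, and to verify \eqref{Th}, \eqref{Tv}.

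For the identity \eqref{reconstructionghros2} I would argue entirely on the Fourier side. By Plancherel the window term equals $\int_{\R^2}|\cF f(\xi)|^2|\cF g(\xi)|^2\,\D\xi$, since $b\mapsto\langle f,T_b g\rangle$ is the inverse Fourier transform of $\cF f\,\overline{\cF g}$. For the horizontal shearlet term I would first integrate in $b$ at fixed $(s,a)$: using $\cF(S_{b,s,a}\psi)(\xi)=|a|^{3/4}\cF\psi(A_a{}^t\!N_s\xi)\E^{-2\pi i b\cdot\xi}$ and Plancherel in $b$ one gets $\int_{\R^2}|\mathcal{S}_\psi[Lf](b,s,a)|^2\,\D b=|a|^{3/2}\int_{\R^2}|\cF f(\xi)|^2\Phi(\xi)^2|\cF\psi(A_a{}^t\!N_s\xi)|^2\,\D\xi$, where I used $\cF(Lf)=\cF f\cdot\Phi$. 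Integrating in $s\in(-2,2)$ and $a\in(-1,1)$ against $\D a/|a|^3$ and applying Fubini pulls the $\xi$-integral outside and leaves the weight $\Phi(\xi)^2\int_{-1}^1\int_{-2}^2|\cF\psi(A_a{}^t\!N_s\xi)|^2\,\D s\,\D a/|a|^{3/2}$. The vertical term is treated identically with $\psi^{\bf v}$, $\tilde{A_a}N_s$ and $\Phi^{\bf v}$. Summing the three contributions, the total frequency weight is precisely the left-hand side of \eqref{gcondition}, hence equals $1$ for every $\xi$, and the expression collapses to $\int_{\R^2}|\cF f(\xi)|^2\,\D\xi=\|f\|^2$.

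For the representation formulas I would repeat the computation of Proposition~\ref{propositionfirst} verbatim, the only change being that the sharp indicator $\chi_C(\tau,\tau v)$ is replaced by $\Phi(\tau,\tau v)$. Since $Lf\in L^2(\R^2)$, formula \eqref{first} applies, and \eqref{fstg} gives $\cF(\cQ[Lf](v,\cdot))(\tau)=|\tau|^{1/2}\cF f(\tau,\tau v)\,\Phi(\tau,\tau v)$. Because $\Phi$ is constant along rays, $\Phi(\tau,\tau v)=\varphi(\arctan v)$ independently of $\tau\neq0$; combined with the slice identity \eqref{fst}, $\cF f(\tau,\tau v)=\cF(\cR f(v,\cdot))(\tau)$, this yields $\cQ[Lf](v,\cdot)=\varphi(\arctan v)\,\cJ_0\cR f(v,\cdot)$. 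Moving $\cJ_0$ across the wavelet transform by self-adjointness and the intertwining $\cJ_0 W_{b,a}=|a|^{-1/2}W_{b,a}\cJ_0$ produces $\mathcal{W}_{\chi_1}(\cR f(v,\cdot))$ with $\chi_1=\cJ_0^2\psi_1$, i.e. $\cF\chi_1(\tau)=|\tau|\cF\psi_1(\tau)$, which is well defined thanks to \eqref{newcondition}; this is exactly \eqref{Th}, and the $x\leftrightarrow y$ symmetric argument gives \eqref{Tv}. The window formula \eqref{Wcoeff2} requires no change, being Proposition~\ref{hcoeff} itself.

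The main obstacle is the identity \eqref{reconstructionghros2}, whose rigorous justification rests on two points. First, one must legitimately interchange the $b$-, $(s,a)$- and $\xi$-integrals (Plancherel followed by Fubini); this interchange is licit because the inner Calder\'on integrals in \eqref{gcondition} are bounded, so the triple integral is dominated by $\|f\|^2$, after which one recognizes the resulting frequency weights as the exact summands of \eqref{gcondition}. This is the step for which the paper invokes the argument of Theorem~3 in \cite[Chapter~2]{kula12}. Second, and more subtly, one must verify the ray-constancy $\Phi(\tau,\tau v)=\varphi(\arctan v)$, which hinges on $\varphi$ being even together with the invariance of the cover $\{U,U^{\bf v}\}$ under the antipodal map $\theta\mapsto\theta+\pi$, so that $\Phi(\xi)=\Phi(-\xi)$; without this the half-lines $\tau>0$ and $\tau<0$ would contribute different weights and the clean Radon representation \eqref{Th} would fail.
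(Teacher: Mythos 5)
Your proof of the identity \eqref{reconstructionghros2} is the paper's own argument: Plancherel identifies the window term with $\int_{\R^2}|\cF f(\xi)|^2|\cF g(\xi)|^2\,{\rm d}\xi$; Plancherel in $b$ followed by Fubini (Tonelli suffices here, the integrands being nonnegative, so no domination argument is needed) identifies the horizontal term with $\int_{\R^2}|\cF f(\xi)|^2\Phi(\xi)^2\int_{-1}^{1}\int_{-2}^{2}|\cF\psi(A_a{}^t\!N_s\xi)|^2\,{\rm d}s\,\frac{{\rm d}a}{a^{3/2}}\,{\rm d}\xi$, and similarly for the vertical term; then \eqref{gcondition} collapses the total frequency weight to $1$. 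The identification of the coefficients through \eqref{Th}, \eqref{Tv} (rerunning Proposition~\ref{propositionfirst} with $\chi_C(\tau,\tau v)$ replaced by $\Phi(\tau,\tau v)$) and through Proposition~\ref{hcoeff} is also what the paper does, except that the paper merely asserts \eqref{Th}--\eqref{Tv} before the theorem, so your sketch of their derivation is a useful addition rather than a deviation.

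Two caveats. First, you omit a component that occupies the second half of the paper's proof: the existence of a smooth $g\in L^1(\R^2)\cap L^2(\R^2)$ satisfying \eqref{gcondition}, without which the theorem risks being vacuous. The paper writes the deficiency $z(\xi)$ (one minus the two weighted Calder\'on integrals) as $\Phi(\xi)^2\bigl(1-\int_{-1}^{1}\int_{-2}^{2}|\cF\psi(A_a{}^t\!N_s\xi)|^2\,{\rm d}s\,\frac{{\rm d}a}{a^{3/2}}\bigr)+\Phi^{\bf v}(\xi)^2\bigl(1-\int_{-1}^{1}\int_{-2}^{2}|\cF\psi^{\bf v}(\tilde{A_a}N_s\xi)|^2\,{\rm d}s\,\frac{{\rm d}a}{a^{3/2}}\bigr)$ using $\Phi^2+(\Phi^{\bf v})^2=1$, then invokes the decay estimates of Lemma 3 in \cite[Chapter 2]{kula12} (this is where smoothness and the vanishing moments of $\psi$ enter) on the supports of $\Phi$ and $\Phi^{\bf v}$ to conclude that $g:=\cF^{-1}\sqrt{z}$ is an admissible choice; your proof should include this step. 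Second, the ray-constancy issue you raise for \eqref{Th} is a genuine subtlety that the paper passes over in silence, but your justification of it is not correct: evenness of $\varphi$, i.e. $\varphi(-\theta)=\varphi(\theta)$, even combined with antipodal invariance of the sets $U$ and $U^{\bf v}$, does not imply the antipodal invariance of the \emph{function}, $\varphi(\theta\pm\pi)=\varphi(\theta)$, which is what $\Phi(\tau,\tau v)=\varphi(\arctan v)$ for $\tau<0$ actually requires. Evenness relates the overlap zone near $\pi/4$ to the one near $-\pi/4$, and the zone near $3\pi/4$ to the one near $-3\pi/4$, but leaves the first pair unconstrained relative to the second. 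The correct fix is to build antipodal invariance into the choice of the partition of unity $\varphi,\varphi^{\bf v}$ from the outset, which is possible precisely because $U$ and $U^{\bf v}$ are antipodally invariant.
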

\begin{proof}
Consider a smooth function $g\in L^1(\R^2)\cap L^2(\R^2)$ such that \eqref{gcondition} holds true.
By Plancherel theorem, we have that 
\begin{align}\label{firstequality2}
\nonumber\int_{\R^2}|\langle f,T_{b}g\rangle|^2\ {\rm d}b&=\int_{\R^2}\left|\int_{\R^2}\cF f(\xi)\overline{\cF g(\xi)}e^{2\pi ib\cdot\xi}\ {\rm d}\xi\right|^2\ {\rm d}b\\
\nonumber&=\int_{\R^2}|\cF^{-1}(\cF f\ \overline{\cF g})(b)|^2\ {\rm d}b\\
&=\int_{\R^2}|\cF f(\xi)|^2|\cF g(\xi)|^2\ {\rm d}\xi.
\end{align}
Using an analogous computation, by Plancherel theorem and Fubini's theorem we have
\begin{align}\label{secondequality2}
\nonumber&\int_{-1}^{1}\int_{-2}^{2}\int_{\R^2}|\mathcal{S}_{\psi}[Lf](b,s,a)|^2\ {\rm d}b{\rm d}s\frac{{\rm d}a}{a^3}\\
\nonumber&=\int_{-1}^{1}\int_{-2}^{2}\int_{\R^2}|\langle Lf, S_{b,s,a}\psi\rangle|^2\ {\rm d}b{\rm d}s\frac{{\rm d}a}{a^3}\\
\nonumber&=\int_{-1}^{1}\int_{-2}^{2}\int_{\R^2}\left|\int_{\R^2}\cF f(\xi)\Phi(\xi)\overline{\cF\psi(A_a{^t\!N_s}\xi)}e^{2\pi i\xi b}\ {\rm d}\xi\right|^2\ {\rm d}b{\rm d}s\frac{{\rm d}a}{a^{3/2}}\\
\nonumber&=\int_{-1}^{1}\int_{-2}^{2}\int_{\R^2}|
\cF^{-1}(\cF f\Phi\overline{\cF\psi(A_a{^t\!N_s}\cdot)})(b)|^2\ {\rm d}b{\rm d}s\frac{{\rm d}a}{a^{3/2}}\\
\nonumber&=\int_{-1}^{1}\int_{-2}^{2}\int_{\R^2}|\cF f(\xi)|^2\Phi(\xi)^2|\cF\psi(A_a{^t\!N_s}\xi)|^2\ {\rm d}\xi{\rm d}s\frac{{\rm d}a}{a^{3/2}}\\
&=\int_{\R^2}|\cF f(\xi)|^2\Phi(\xi)^2\int_{-1}^{1}\int_{-2}^{2}|\cF\psi(A_a{^t\!N_s}\xi)|^2\ {\rm d}s\frac{{\rm d}a}{a^{3/2}}{\rm d}\xi.
\end{align} 
Similarly, we have that 
\begin{align}\label{thirdquality2}
\nonumber&\int_{-1}^{1}\int_{-2}^{2}\int_{\R^2}|\mathcal{S}^{\bf v}_{\psi^{\bf v}}[L^{\bf v}f](b,s,a)|^2\ {\rm d}b{\rm d}s\frac{{\rm d}a}{a^3}\\
&=\int_{\R^2}|\cF f(\xi)|^2\Phi^{\bf v}(\xi)^2\int_{-1}^{1}\int_{-2}^{2}|\cF\psi^{\bf v}(\tilde{A_a}N_s\xi)|^2\ {\rm d}s\frac{{\rm d}a}{a^{3/2}}{\rm d}\xi.
\end{align} 
Thus, combining equations \eqref{firstequality2}, \eqref{secondequality2} and \eqref{thirdquality2} we obtain the reconstruction formula 
\begin{align}\label{eqfinal}
\nonumber\|f\|^2&=\int_{\R^2}|\langle f,T_{b}g\rangle|^2\ {\rm d}b+\int_{-1}^{1}\int_{-2}^{2}\int_{\R^2}|\mathcal{S}_{\psi}[Lf](b,s,a)|^2\ {\rm d}b{\rm d}s\frac{{\rm d}a}{|a|^3}\\
&+\int_{-1}^{1}\int_{-2}^{2}\int_{\R^2}|\mathcal{S}^{\bf v}_{\psi^{\bf v}}[L^{\bf v}f](b,s,a)|^2\ {\rm d}b{\rm d}s\frac{{\rm d}a}{|a|^3},
\end{align}
for any $f\in L^1(\R^2)\cap L^2(\R^2)$, where the shearlet coefficients are given by \eqref{Th} and \eqref{Tv}. It is worth observing that there always exists a function $g$
satisfying~\eqref{gcondition} provided that the admissible vector $\psi$ is smooth and possesses infinitely vanishing moments in the $x_1$-direction \cite{gr11}. Indeed, we have that 
\[
\begin{split}
z(\xi)&:=1 -\Phi(\xi)^2\int_{-1}^{1}\int_{-2}^{2}|\cF\psi(A_a{^t\!N_s}\xi)|^2\ {\rm d}s\frac{{\rm d}a}{a^{3/2}}\\
&-\Phi^{\bf v}(\xi)^2\int_{-1}^{1}\int_{-2}^{2}|\cF\psi^{\bf v}(\tilde{A_a}N_s\xi)|^2\ {\rm d}s\frac{{\rm d}a}{a^{3/2}}\\
&=\Phi(\xi)^2(1 -\int_{-1}^{1}\int_{-2}^{2}|\cF\psi(A_a{^t\!N_s}\xi)|^2\ {\rm d}s\frac{{\rm d}a}{a^{3/2}})\\
&+\Phi^{\bf v}(\xi)^2(1-\int_{-1}^{1}\int_{-2}^{2}|\cF\psi^{\bf v}(\tilde{A_a}N_s\xi)|^2\ {\rm d}s\frac{{\rm d}a}{a^{3/2}})\\
%&=\varphi_1(\frac{\xi_2}{\xi_1})(\int_{a\in\R^*}\int_{|s|>2}|\cF\psi(A_a{^t\!N_s}\xi)|^2\ {\rm d}s\frac{{\rm d}a}{a^{3/2}}\\
%&+\int_{|a|>1}\int_{|s|<2}|\cF\psi(A_a{^t\!N_s}\xi)|^2\ {\rm d}s\frac{{\rm d}a}{a^{3/2}})\\
%&+\varphi_2(\frac{\xi_2}{\xi_1})(\int_{a\in\R^*}\int_{|s|>2}|\cF\psi^{v}(\tilde{A_a}N_s\xi)|^2\ {\rm d}s\frac{{\rm d}a}{a^{3/2}}\\
%&+\int_{|a|>1}\int_{|s|<2}|\cF\psi^{v}(\tilde{A_a}N_s\xi)|^2\ {\rm d}s\frac{{\rm d}a}{a^{3/2}}).
\end{split}
\]
Following the proof of Lemma 3 in \cite[Chapter 2]{kula12} it is possible to prove that 
\[
\begin{split}
1 -\int_{-1}^{1}\int_{-2}^{2}|\cF\psi(A_a{^t\!N_s}\xi)|^2\ {\rm d}s\frac{{\rm d}a}{a^{3/2}}=O(|\xi|^{-N}),\,\,\,\left|\frac{\xi_2}{\xi_1}\right|\leq\tan{(\frac{\pi}{4}+\epsilon)},
\end{split}
\]
for all $N\in\N$.
Analogously, 
\[
\begin{split}
1-\int_{-1}^{1}\int_{-2}^{2}|\cF\psi^{\bf v}(\tilde{A_a}N_s\xi)|^2\ {\rm d}s\frac{{\rm d}a}{a^{3/2}}=O(|\xi|^{-N}),\,\,\,\left|\frac{\xi_1}{\xi_2}\right|\leq\cot{(\frac{\pi}{4}-\epsilon)},
\end{split}
\]
for all $N\in\N$.
 Therefore, there exists a
smooth function $g\in L^1(\R^2)\cap L^2(\R^2)$ such that $\cF g(\xi)=\sqrt{z(\xi)}$, so
that~\eqref{gcondition} holds true. Finally, by Proposition~\ref{hcoeff}, we can express the coefficients $\langle f,T_{b}g\rangle$ in reconstruction formula \eqref{eqfinal} in terms of $\cR f$ only.
\end{proof}

\section{Acknowlegments}
\label{JSsec:9}

F. Bartolucci, F. De Mari and E. De Vito are members of the Gruppo Nazionale per
l'Analisi Matematica, la Probabilit\`a e le loro Applicazioni (GNAMPA)
of the Istituto Nazionale di Alta Matematica (INdAM).

%For acknowledgments please use the \verb|acknowledgement| environment. Place it at the end of your text, before the references. 

%\begin{acknowledgement}
%	For acknowledgments please use the \verb|acknowledgement| environment.  
%\end{acknowledgement}

%\biblstarthook{
%	For the list of references use the \texttt{thebibliography} environment. Write the list in the main tex-file, \textit{do not use external sources} like a bbl-file. 	
%	References will be cited in the text by their number, e.g. \cite{JSRodino1982}. Make sure that all references from the list are cited in the text. The reference list should be sorted in alphabetical order (according to the first named authors). If there are several works by the same (first) author, the following order should be used: 
%	\begin{enumerate}
%		\item all works by the author alone, ordered chronologically by year of publication
%		\item all works by the author with a coauthor, ordered alphabetically by coauthor
%		\item all works by the author with several coauthors, ordered chronologically by year of publication.
%	\end{enumerate}
%Always use the standard abbreviation of a journal's name according to the ISSN \textit{List of Title Word Abbreviations}, see \url{http://www.issn.org/en/node/344}.
%}

\bibliography{biblio2.bib}

\end{document}